\documentclass[12pt,a4paper]{article}

\usepackage[T1]{fontenc}
\usepackage[utf8]{inputenc}
\usepackage{float}
\usepackage{graphicx}
\usepackage{array}     
\usepackage{multirow}  
\usepackage{subcaption} 
\usepackage[thinlines]{easytable}
\usepackage{hyperref}
\usepackage{wrapfig}
\usepackage{dingbat}
\usepackage{color}
\usepackage{refcount}
\usepackage[table]{xcolor}

\usepackage[toc,page]{appendix}
\usepackage{bbm}
\usepackage{enumitem}
\usepackage{amsmath,amssymb,amsthm}
\usepackage{mathtools}
\usepackage[english]{babel}
\usepackage{natbib}
\usepackage{xcolor}
\usepackage[normalem]{ulem}
\usepackage{bibunits}
\usepackage{mathtools}
\usepackage[table]{xcolor}

\usepackage{threeparttable}
\usepackage{siunitx}
\usepackage{booktabs}
\usepackage{multirow}
\usepackage{adjustbox}

\mathtoolsset{showonlyrefs}

\theoremstyle{definition}
\theoremstyle{plain}
\newtheorem{theo}{Theorem}[section]
\newtheorem{lemma}[theo]{Lemma}

\newtheorem{cor}[theo]{Corollary}
\theoremstyle{definition}

\newtheorem{assumptions}[theo]{Assumptions}
\theoremstyle{remark}
\newtheorem{rem}[theo]{Remark}

\numberwithin{equation}{section}

\definecolor{violet}{RGB}{148,0,211}
\definecolor{olivegreen}{RGB}{107,142,35}

\newcommand{\cond}{\overset{d}{\to}}
\newcommand{\R}{\mathbb{R}}

\newcommand{\E}{\mathbb{E}}
\renewcommand{\Pr}{\mathbb{P}}
\newcommand{\conp}{\stackrel{\mathbb{P}}{\longrightarrow}}
\newcommand{\Cspace}{C([0,1])}
\newcommand{\norminf}[1]{\left\lVert #1 \right\rVert_{\infty}}
\newcommand{\floor}[1]{\left\lfloor #1 \right\rfloor}

\newcommand{\barX}{\overline X}

\usepackage{algorithm}
\usepackage{algpseudocode}

\begin{document}
\date{}
\author{Patrick Bastian \\
Department of Mathematics, Aarhus University \\
patrick.bastian@math.au.dk}

\title{Selfnormalization for relevant inference with supremum-type statistics}
\maketitle
\begin{abstract}
    We develop a selfnormalized approach to inference for relevant changes in functional time series measured by the supremum norm. The main difficulty is that the supremum norm is not Hadamard differentiable, so standard projection-based selfnormalization does not apply and the limiting distribution may depend on the geometry of the extremal set and the long-run covariance structure. We address this problem by replacing the supremum norm with a smooth log-sum-exp approximation and constructing a projected selfnormalizer from its derivative. The resulting statistic has an asymptotically pivotal distribution that is free of long-run covariance nuisance parameters and depends only on the break location. We derive explicit smoothing-bias expansions for both isolated nondegenerate extrema and extremal sets of positive measure. To avoid direct estimation of geometric quantities such as the number, curvature, or measure of the extrema, we combine several smoothing levels to cancel the leading bias terms. This yields an asymptotically exact test for relevant changes under mild regularity conditions. More generally, the proposed smoothing and bias-correction principles provide a framework for combining selfnormalization with supremum-type statistics in problems involving relevant hypotheses. 
\end{abstract}

\defaultbibliographystyle{apalike}
\defaultbibliography{reference}
\begin{bibunit}
\section{Introduction }

Statistical inference for dependent data typically requires estimation of a
long-run variance or long-run covariance operator. Such estimators involve
bandwidth, truncation, or block-length choices, and their finite-sample
performance can be sensitive to these tuning parameters. But even if an oracle choice for the parameters is provided the size of the resulting tests is often inflated when sample sizes are moderately small or autocorrelation strong. Selfnormalization (see \cite{shao:2010} and for a survey \cite{shao:2015}) provides an alternative: recursive estimates are used to construct a random normalizer so that the resulting statistic has an (almost) pivotal limiting distribution that does not depend on the long-run
covariance structure.\\

This principle is particularly attractive for change point problems,
where estimating the long-run covariance is complicated by the possible
presence and unknown location of a break. \cite{shao:zhang:2010} propose a selfnormalized CUSUM based change point test for univariate time series. Building on their insights several extensions for more complicated settings have been developed (e.g. \cite{wang:zhu:volgushev:shao:2022} for high dimensional or \cite{Chang:Ng:Yau:2021} for online change point detection).\\

In many modern applications data are (discretizations of) smooth curves rather than intrinsically finite dimensional objects. Accordingly a rich literature on change point tests for such data (e.g. \cite{berkes:gabrys:horvath:kokoszka:2009}, \cite{aston:kirch:2012}, \cite{aue:rice:sonmez:2018}) has developed, with some authors applying the principle of selfnormalization also in this context (e.g. \cite{zhang:shao:2015} and \cite{dette:kokot:volgushev:2020}). All of these papers have in common that their theoretical setting is the Hilbert space of square integrable functions and that their test procedures are based on $L^2$ type statistics. Yet in many cases it is more natural to execute the analysis in the Banach space $(C([0,1]),\|\cdot\|_\infty)$ as the supremum norm offers intuitive visual interpretation, improved detection of localized deviations and also provides simultaneous confidence bands for the parameter of interest. This perspective was first presented in \cite{dette:kokot:aue:2020} and has since been developed in a number of different directions (e.g. \cite{bastian:basu:dette:2024} for multiple and \cite{bastian:dette:2025b} for gradual and \cite{kutta:doernemann:2025} for online change point problems, \cite{cai:hu:2024} for sample scheme agnostic change point inference, \cite{Telschow:Davenport:Schwartzman:2022} for simultaneous confidence bands for a general class of parameters, \cite{Munko:Ditzhaus:Pauly:Smaga:2025} for confidence bands in functional MANOVA and \cite{Chang:McKeague:2022} for an approach that is able to cope with discontinuities).\\

The absence of selfnormalization results in this framework is no accident. The classical selfnormalization approach is driven by an asymptotically linear expansion of smooth functionals of the CUSUM process. This expansion is then used to establish that the limiting distribution is dependent on a univariate long-run variance in a multiplicative way, allowing for cancellation by an appropriate normalizer. In contrast the map $f \mapsto \|f\|_\infty$ is not Hadamard differentiable. It possesses only directional differentials resulting in a non-linear derivative (i.e. directional Hadamard differentiability). As a result the limiting distributions often depend in a sensitive way on the geometry of certain extremal sets (see \cite{carcamo:cuevas:rodriguez:2020} for an in-depth analysis) and thus also on (potentially) the whole long-run covariance operator of the error processes and not just on the long-run variance of some functional. This prevents the standard projection-based selfnormalization construction from working. \\

The purpose of the present paper is to close this gap. For a functional time series $X_1,...,X_n \in C([0,1])$ that satisfies
\[
    X_i(t)=\begin{cases}
        \mu_1(t)+\epsilon_i(t), \ 1 \leq i \leq k_0\\
        \mu_2(t)+\epsilon_i(t), \ k_0 < i \leq n
    \end{cases}
\]
we consider the hypotheses 
\begin{align}
    H_0(\Delta):M\leq \Delta, \quad \text{versus} \quad H_1(\Delta):M>\Delta
\end{align}
where $M:=\|\mu_1-\mu_2\|_\infty$. For a thorough motivation and introduction to this type of hypotheses we refer to \cite{dette:kokot:aue:2020}. Our goal is to derive a selfnormalized testing procedure for this problem. To that end we consider a smoothed maximum functional given by
\[
  \Psi_\beta(f)
  =\frac{1}{\beta}\log\int_0^1
    \{\exp(\beta f(t))+\exp(-\beta f(t))\}\,dt.
\]
which (in a discretized form) has also been used in \cite{chernozhukov:chetverikov:kato:2013} to derive high dimensional Gaussian approximations. When $\beta\to \infty$ this functional converges to the supremum norm of $f$ and for fixed $\beta$ it is a Fréchet differentiable function whose curvature is proportional to $\beta$. This is where the key tension of the construction we propose lies: Let $d(\cdot)=\mu_1(\cdot)-\mu_2(\cdot)$ and note that larger $\beta$ reduces the bias
\[
    b_\beta(d)=\Psi_\beta(d)-\lVert d\rVert_\infty,
\]
but incurs a larger Taylor remainder. As a result of this, naive centering at $M=\|d\|_\infty=\|\mu_1-\mu_2\|_\infty$ fails and a bias correction becomes necessary. Additional difficulties arise as explicit bias expansions are extremely sensitive to the geometry of $\mathcal E(d)=\{ t | |d(t)|=M\}$, even yielding rates of different orders in common scenarios. To resolve this issue we develop a procedure in the spirit of \cite{Schucany:Sommers:1977} that cancels the bias for a wide variety  of geometric structures of $\mathcal{E}$, leading to the first selfnormalized testing procedure in the  $C([0,1])$ functional data framework. 

We close this section with a summary of our main contributions
\begin{enumerate}[label=(\roman*)]
  \item We develop direct selfnormalized inference for relevant changes
  measured in the supremum norm. This yields a limiting distribution that is free of long-run
  covariance nuisance parameters and that only depends on the break
  location. We stress that the proposed methodology can easily be extended to other relevant testing problems involving supremal quantities. 
  \item We derive explicit smoothing-bias expansions for qualitatively different
  extremal geometries, including finitely many nondegenerate extrema and flat
  extremal sets. The expansions clarify how the smoothing parameter interacts
  with the local geometry of the mean contrast and with the stochastic
  linearization error.

  \item We construct a bias-corrected selfnormalized statistic by combining
  several smoothing levels. The correction removes the leading bias terms
  across a broad class of extremal configurations without requiring direct
  estimation of the number, curvature, signs, or measure of the extrema. 
\end{enumerate}

\section{Methodology}

Consider a functional time series
\[
        X_1,\ldots,X_n \in \Cspace,
\]
where \(\Cspace\) is equipped with the supremum norm \(\norminf{f}=\sup_{t\in[0,1]}|f(t)|\).  Throughout, the index \(i\) denotes time, while \(t\in[0,1]\) denotes the argument of the observed function.  The basic model is
\[
        X_i(t)=\mu_i(t)+\epsilon_i(t),\qquad t\in[0,1],
\]
where \(\mu_i\in\Cspace\) is the mean function at time \(i\) and \((\epsilon_i)\) is a centered dependent functional error process. There is an unknown break fraction \(\tau_0\in(0,1)\), with \(k_0=\floor{n\tau_0}\), such that
\[
        \mu_i=\mu_1 \quad (i\le k_0),
        \qquad
        \mu_i=\mu_2 \quad (i>k_0),
\]
for two functions \(\mu_1,\mu_2\in\Cspace\).  The size of the change is measured by
\[
        d=\mu_1-\mu_2,
        \qquad
        M=\norminf{d}.
\]
and we want to test the hypotheses 
\begin{align}
\label{eq:hypotheses-defin}
   H_0: M\le \Delta
        \qquad\text{versus}\qquad
        H_1: M>\Delta, 
\end{align}
for some $\Delta>0$ that distinguishes practically meaningful from statistically detectable changes.\\

The standard starting point is the functional CUSUM process
\[
        \widehat U_n(s,t)
        =\frac1n\sum_{i=1}^{ns}\{X_i(t)-\barX_n(t)\},
        \qquad
        \barX_n=\frac1n\sum_{i=1}^n X_i .
\]
where sums with non-integer boundaries are to be understood as linearly interpolated. A classical supremum-based scan statistic for the presence of a change is
\[
        \widehat{\mathcal C}_n
        =\sup_{s\in[0,1]}
          \norminf{\widehat U_n(s,\cdot)},
\]
A corresponding estimate of the break fraction is
\[
        \widehat\tau_n\in
        \operatorname*{arg\,max}_{s\in[1/n,1-1/n]}
        \norminf{\widehat U_n(s,\cdot)},
\]
where the restriction in the interval is only present to ensure that the selfnormalizer we define later is always well-defined. Writing \(\widehat k_n=\floor{n\widehat\tau_n}\), the post-estimation contrast between the two regimes is
\[
        \widehat d_n(t)
        =\frac1{\widehat k_n}\sum_{i=1}^{\widehat k_n}X_i(t)
        -\frac1{n-\widehat k_n}\sum_{i=\widehat k_n+1}^{n}X_i(t),
        \qquad t\in[0,1],
\]
and the direct supremum norm estimator of the relevant change size is \(\widehat M_n=\norminf{\widehat d_n}\). Our goal is selfnormalized inference for $M$ using this estimate. As documented in \cite{dette:kokot:aue:2020} the statistical difficulty is that the map \(f\mapsto\norminf{f}\) is not a smooth linear functional.  Its limiting distribution is non-Gaussian and depends on the geometry of the extremal set of \(d\), making the classical approach to selfnormalization as in \cite{shao:zhang:2010} unsuitable.\\

To avoid this non-smoothness, we replace the supremum norm by a
differentiable soft maximum. For \(\beta>0\), define
\[
        \Psi_\beta(f)
        =
        \frac1\beta
        \log\int_0^1
        \left\{\exp(\beta f(t))+\exp(-\beta f(t))\right\}\,dt,
        \qquad f\in\Cspace .
\]
The selfnormalized statistic is built after the split \(\widehat k_n\)
has been selected. To define the selfnormalizer, let
\[
        S_{a,b}(t)=\sum_{i=a}^{b}X_i(t),
        \qquad 1\le a\le b\le n .
\]
At the estimated split \(\widehat k_n\), introduce the left and right
bridge processes
\[
        B_{q}^{L}(s)
        =
        S_{1,q}(s)
        -
        \frac{q}{\widehat k_n}S_{1,\widehat k_n}(s),
        \qquad 1\le q\le \widehat k_n,
\]
and
\[
        B_{q}^{R}(s)
        =
        S_{q,n}(s)
        -
        \frac{n-q+1}{n-\widehat k_n}
        S_{\widehat k_n+1,n}(s),
        \qquad \widehat k_n+1\le q\le n .
\]
The projected change point selfnormalizer is
\begin{align}
\label{eq:defin-Wn}
    \widehat W_n^{(\beta)}
        =
        \frac1{n^2}
        \left[
        \sum_{q=1}^{\widehat k_n}
        \left\{
        D\Psi_{\beta,\widehat d_n}
        \left(B_q^{L}\right)
        \right\}^2
        +
        \sum_{q=\widehat k_n+1}^{n}
        \left\{
        D\Psi_{\beta,\widehat d_n}
        \left(B_q^{R}\right)
        \right\}^2
        \right],
\end{align}
where $D\Psi_{\beta,f}(h)$ is the derivative of $\Psi_\beta$ at $f$, evaluated in $h$, i.e.
\[
    D\Psi_{\beta,f}(h)
=
\frac{
\int_0^1 h(t)\{e^{\beta f(t)}-e^{-\beta f(t)}\}\,dt
}{
\int_0^1 \{e^{\beta f(t)}+e^{-\beta f(t)}\}\,dt
}.
\]

A natural statistic for the relevant hypotheses then is
\[
        T_n^{\mathrm{SN}}(\Delta)
        =
        \frac{
        \sqrt n\{\Psi_{\beta_n}(\widehat d_n)-\Delta\}
        }{
        \sqrt{\widehat V_n^{(\beta_n)}}
        }, \qquad 
        \widehat V_n^{(\beta)}
        =
        \left\{
        \frac{n^2}{\widehat k_n(n-\widehat k_n)}
        \right\}
        \widehat W_n^{(\beta)} .
\]
where \(\beta_n\to\infty\) is allowed to increase with the sample size. The role of \(\beta_n\) is to balance stochastic linearization against deterministic smoothing bias.  A Taylor expansion around the population contrast \(d\) gives
\begin{align}
\label{eq:taylor-expansion}
     \Psi_{\beta_n}(\widehat d_n)-\Psi_{\beta_n}(d)
        =D\Psi_{\beta_n,d}(\widehat d_n-d)+R_{n,\beta_n},
\end{align}
  
and the curvature of \(\Psi_\beta\) is of order \(\beta\).  Thus, when \(\norminf{\widehat d_n-d}=O_p(n^{-1/2})\), the $\sqrt{n}$-scaled remainder is controlled by the requirement
\[
        \beta_n=o(\sqrt n).
\]
Consequently, the smoothed statistic behaves like a selfnormalized scalar partial-sum statistic under the following assumptions:

\begin{assumptions}\label{ass:population-centering}
~~
\begin{enumerate}[label=\textnormal{(A\arabic*)},leftmargin=3.2em]
\item\label{ass:wip}
The linearly interpolated partial-sum process
\[
\mathbb U_n(r)
=
\begin{cases}
    \frac1{\sqrt n}\sum_{i=1}^{\floor{nr}}\epsilon_i,
\qquad &nr\in \mathbb N\\
\text{linearly interpoalted,}& nr \notin \mathbb N
\end{cases},
\]
satisfies the weak invariance principle
\[
\mathbb U_n\cond\mathbb B
\quad\text{in }C\bigl([0,1];\Cspace\bigr),
\]
where \(\mathbb B\) is a centered \(\Cspace\)-valued Brownian motion with
continuous sample paths. In particular, \(\E\norminf{\mathbb B(1)}^2<\infty\).

\item\label{ass:break-estimator}
If $M>0$ the estimated split obeys
\[
|\widehat k_n-k_0|=o_p(\sqrt n).
\]

\item\label{ass:beta-rate}
The smoothing sequence satisfies
\[
\beta_n\longrightarrow\infty,
\qquad
\beta_n=o(\sqrt n).
\]

\item\label{ass:nondegenerate}
Define
\[
\ell_n:=D\Psi_{\beta_n,d}\in\Cspace^*,
\qquad
\sigma_n^2:=\E\{\ell_n(\mathbb B(1))^2\},
\]
where $B$ is the Brownian motion from (A1). If $d\neq 0$ we require that the projected long-run variance is uniformly non-degenerate:
\[
\liminf_{n\to\infty}\sigma_n^2>0.
\]
\end{enumerate}
\end{assumptions}
Before stating a first weak convergence result let us briefly discuss these assumptions. (A1) is satisfied under any of the widely used weak dependence concepts (i.e. $\alpha$ or $\beta$-mixing, $L^p$-$m$-approximability or physical dependence) whenever the data satisfy a smoothness condition of the form
\[
    \E[|X_i(t)-X_i(t')|^p]^{1/p}\leq C|t-t'|^\gamma
\]
where $\gamma$ and the decay of the dependence coefficients determine the minimal $p$ that is required. (A2) follows under the same conditions using, for instance, the results from \cite{hariz:wylie:zhang:2007}. (A3) gives an upper bound on the smoothing parameter $\beta$ and determines the leading order of the bias $\Psi_\beta(d)-\|d\|_\infty$. (A4) is necessary for the limit of the selfnormalizer to be well defined. It can be substantially weakened (to decay to 0 at certain rates) if one is willing to assume a strong approximation of the partial sum process (i.e. such as in \cite{dehling:1983}) instead of (A1).

\begin{theo}[Selfnormalized limit with population smoothing centering]
\label{theo:population-centered-sn}
Suppose Assumptions~\ref{ass:population-centering} hold and that $M>0$. Then
\[
T_n^{\mathrm{SN}}\bigl(\Psi_{\beta_n}(d)\bigr)
=
\frac{
\sqrt n\{\Psi_{\beta_n}(\widehat d_n)-\Psi_{\beta_n}(d)\}
}{
\sqrt{\widehat V_n^{(\beta_n)}}
}
\cond
\mathcal T_{\tau_0},
\]
where 
\[
    \mathcal T_{\tau_0}
    \overset{d}=
    \frac{Z}
    {\left\{
    \tau_0^2\int_0^1\mathbb B_1^0(u)^2\,du
    +(1-\tau_0)^2\int_0^1\mathbb B_2^0(u)^2\,du
    \right\}^{1/2}}=:\frac{Z}{D},
\] 
with $Z\sim N(0,1)$, $\mathbb B_i^0, \ i=1,2$ being standard Brownian bridges and all three being mutually independent. In particular, the limit depends on
the data-generating process only through the break fraction \(\tau_0\), and is free of long-run covariance nuisance parameters. Moreover,
\(\widehat V_n^{(\beta_n)}>0\) with probability tending to one.
\end{theo}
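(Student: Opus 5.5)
The proof decomposes the statistic into numerator and selfnormalizer, reduces both to functionals of the single scalar process $\ell_n(\mathbb U_n(\cdot))$, and then exploits the factorization of the limit through the projected long-run standard deviation $\sigma_n$. Since $\sigma_n$ may not converge, we argue along subsequences: by (A4) and $\sigma_n^2\le \E\norminf{\mathbb B(1)}^2\,\sup_n\|\ell_n\|^2$ with $\|\ell_n\|_{\Cspace^*}\le 1$ (the derivative is a ratio of integrals bounded in absolute value by one), $\sigma_n$ is bounded away from zero and infinity. We will show that
\[
\frac{\sqrt n\{\Psi_{\beta_n}(\widehat d_n)-\Psi_{\beta_n}(d)\}}{\sigma_n} \quad\text{and}\quad \frac{\widehat V_n^{(\beta_n)}}{\sigma_n^2}
\]
converge jointly to $(Z/(\tau_0(1-\tau_0)),\,D^2/(\tau_0(1-\tau_0))^2)$ up to a common scaling. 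In this ratio $\sigma_n$ cancels, which yields the claim by the continuous mapping theorem.

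\emph{Step 1 (numerator).} Use the Taylor expansion \eqref{eq:taylor-expansion}. Bound the remainder $|R_{n,\beta_n}|\le C\beta_n\norminf{\widehat d_n-d}^2$, using that the second derivative of $\Psi_\beta$ is bounded by a multiple of $\beta$ uniformly in the base point. Show $\norminf{\widehat d_n-d}=O_p(n^{-1/2})$ from (A1)--(A2). Then (A3) gives $\sqrt n R_{n,\beta_n}=o_p(1)$. Next, replace $\widehat k_n$ by $k_0$ in $\widehat d_n$: the difference consists of means over at most $|\widehat k_n-k_0|=o_p(\sqrt n)$ misclassified observations. It contributes $O_p(|\widehat k_n-k_0|/n)\cdot M=o_p(n^{-1/2})$ through the mean jump, plus a negligible stochastic part by tightness of $\mathbb U_n$. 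Hence
\[
\sqrt n\, D\Psi_{\beta_n,d}(\widehat d_n-d)=\ell_n\bigl(\mathbb U_n(\tau_0)\bigr)/\tau_0-\ell_n\bigl(\mathbb U_n(1)-\mathbb U_n(\tau_0)\bigr)/(1-\tau_0)+o_p(1).
\]

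\emph{Step 2 (selfnormalizer).} First replace $D\Psi_{\beta_n,\widehat d_n}$ by $\ell_n$. The bound $\|D\Psi_{\beta,f}-D\Psi_{\beta,g}\|\le C\beta\norminf{f-g}=O_p(\beta_n n^{-1/2})=o_p(1)$, combined with $\max_q\norminf{B_q^{L/R}}/\sqrt n=O_p(1)$, makes this replacement negligible in $\widehat W_n$. Second, replace $\widehat k_n$ by $k_0$ in the bridges. The bridges annihilate the mean within a correctly specified segment, so misclassified observations contribute a drift of order $|\widehat k_n-k_0|M/\sqrt n=o_p(1)$ after scaling by $\sqrt n$; this is exactly where (A2) with the $o_p(\sqrt n)$ rate is needed. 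After these replacements, $n^{-1/2}\ell_n(B^L_{\lfloor \tau_0 n u\rfloor})$ is a functional of $r\mapsto\ell_n(\mathbb U_n(r))$ on $[0,\tau_0]$, namely $\ell_n(\mathbb U_n(\tau_0u))-u\,\ell_n(\mathbb U_n(\tau_0))$. The right bridge is handled analogously on $[\tau_0,1]$. Riemann sums then give $\widehat W_n$ as a continuous functional of this scalar process, with the factors $\tau_0^2,(1-\tau_0)^2$ arising from the change of variables.

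\emph{Step 3 (joint limit and cancellation).} Since $\|\ell_n\|\le1$, the map $f\mapsto\ell_n\circ f$ is equi-Lipschitz from $C([0,1];\Cspace)$ to $C([0,1])$. Along any subsequence, choose a further subsequence on which $\sigma_n\to\sigma>0$ and $\ell_n$ converges weak-$*$. Combining (A1) with the equi-Lipschitz property, $\ell_n(\mathbb U_n(\cdot))/\sigma_n$ converges to a standard scalar Brownian motion $W$; identifying this limit through covariances is, I expect, the main obstacle. The increments of $W$ over $[0,\tau_0]$ and $[\tau_0,1]$ are independent. The endpoint values and the Brownian bridges built from $W$ on each piece are also independent, which gives $Z$, $\mathbb B_1^0$ and $\mathbb B_2^0$ mutually independent after rescaling. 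The numerator limit is
\[
\sigma\bigl(W(\tau_0)/\tau_0-(W(1)-W(\tau_0))/(1-\tau_0)\bigr)\sim \sigma N\bigl(0,1/(\tau_0(1-\tau_0))\bigr).
\]
The factor $n^2/(\widehat k_n(n-\widehat k_n))\to1/(\tau_0(1-\tau_0))$ then matches the variance, and $\sigma$ cancels in the ratio. Since the limit does not depend on the subsequence, the full sequence converges. Finally, $D>0$ almost surely, so the continuous mapping theorem applies and $\Pr(\widehat V_n^{(\beta_n)}>0)\to1$ follows from the portmanteau theorem.
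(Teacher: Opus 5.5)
Your proposal is correct and follows essentially the same route as the paper: Taylor linearization with a remainder of order $\beta_n\norminf{\widehat d_n-d}^2$, and oracle replacement of $\widehat k_n$ and of $D\Psi_{\beta_n,\widehat d_n}$ via the Lipschitz bound on the derivative. It then uses a subsequence argument based on weak-$*$ sequential compactness of the dual unit ball (available because $\Cspace$ is separable), which gives $\ell_n(\mathbb U_n(\cdot))\cond\ell(\mathbb B(\cdot))=\sigma W$, so that $\sigma$ cancels in the ratio. The step you flag as the main obstacle is settled as in the paper: $\ell(\mathbb B(\cdot))$ is automatically a centered Gaussian process with independent increments, and $\sigma_n^2\to\E\{\ell(\mathbb B(1))^2\}$ by dominated convergence, using $|\ell_n(\mathbb B(1))|\le\norminf{\mathbb B(1)}$.
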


Centering the statistic at the true supremum norm \(M\) introduces the deterministic bias
\[
        b_{\beta_n}(d)=\Psi_{\beta_n}(d)-\norminf{d},
\]
which needs to be accounted for. In the following we present two ways of doing this, a generally valid but conservative one and a more restrictive but exact procedure.

\paragraph{Bounding the bias}
Using that $\exp(\beta f(t))=\exp(\beta M)\exp(\beta(f(t)-M))$ we have
\[
\int_0^1\exp(\beta f(t))dt=\exp(\beta M)\int_0^1\exp(\beta(f(t)-M))dt
\]
(and similarly for the negative exponential), taking logarithms it easily follows that 

\begin{align}
\label{eq:explicit-bias}
b_{\beta_n}(d)
&=
\beta_n^{-1}
\log\left(
\sum_{s\in\{-1,1\}}
\int_0^1
\exp\bigl(\beta_n\{s d(t)-M\}\bigr)\,dt
\right)
\nonumber\\
&\leq
\beta_n^{-1}
\log\left(1+\exp(-2\beta_n M)\right).
\end{align}
Under \(H_0(\Delta)\), the function
\[
M\mapsto
M-\Delta+
\beta_n^{-1}\log\left(1+\exp(-2\beta_n M)\right)
\]
is nondecreasing on \([0,\Delta]\). Consequently,
\[
\Psi_{\beta_n}(d)-\Delta
\leq
\beta_n^{-1}
\log\left(1+\exp(-2\beta_n\Delta)\right).
\]
One may therefore use the statistic
\begin{align}
\label{eq:conservative-bias-correction}
T_{n,1}^{\mathrm{SN}}(\Delta)
=
\frac{
\sqrt{n}\left\{
\Psi_{\beta_n}(\widehat d_n)
-\Delta
-\beta_n^{-1}
\log\left(1+\exp(-2\beta_n\Delta)\right)
\right\}
}{
\sqrt{\widehat V_n^{(\beta_n)}}
}
\end{align}
to test \(H_0(\Delta)\). The following
theorem is a direct corollary of the proof of
Theorem~\ref{theo:population-centered-sn}.

\begin{theo}
\label{theo:conservative-bias-correction}
Suppose that Assumptions~\ref{ass:population-centering} hold with
\(d=d_n\), where
\(M_n:=\lVert d_n\rVert_\infty\) satisfies
\(0<\liminf_{n\to\infty}M_n\leq \lim \sup_{n \to \infty}M_n<\infty\). Let \(q_{1-\alpha_0}(\tau)\) denote the
\((1-\alpha_0)\)-quantile of \(\mathcal T_\tau\). Define
\[
\gamma_n
:=
\frac{\sqrt{n\tau_0(1-\tau_0)}}{\sigma_n}
\left\{
M_n-\Delta
+b_{\beta_n}(d_n)
-\frac{
\log\left(1+\exp(-2\beta_n\Delta)\right)
}{\beta_n}
\right\}.
\]
If \(\gamma_n\to\gamma\in\overline{\mathbb R}\), then
\[
\mathbb P\left(
T_{n,1}^{\mathrm{SN}}(\Delta)
>
q_{1-\alpha_0}(\widehat\tau_n)
\right)
\longrightarrow
\mathbb P\left(
\frac{Z+\gamma}{H_{\tau_0}}
>
q_{1-\alpha_0}(\tau_0)
\right),
\]
where
\[
H_{\tau}
:=
\left\{
\tau^2\int_0^1\bigl(B_1^0(u)\bigr)^2\,du
+
(1-\tau)^2\int_0^1\bigl(B_2^0(u)\bigr)^2\,du
\right\}^{1/2},
\]
and \(Z,B_1^0,B_2^0\) are mutually independent, with
\(Z\sim N(0,1)\) and \(B_1^0,B_2^0\) standard Brownian bridges.
For \(\gamma\in\mathbb R\), the limiting rejection probability equals
\[
\mathbb E\left[
1-\Phi\left\{
q_{1-\alpha_0}(\tau_0)H_{\tau_0}-\gamma
\right\}
\right].
\]

In particular, under \(H_0(\Delta)\),
\begin{align}
\label{eq:bias-bound}
   M_n-\Delta+b_{\beta_n}(d_n)
\leq
\frac{
\log\left(1+\exp(-2\beta_n\Delta)\right)
}{\beta_n}, 
\end{align}
so the test has asymptotic level at most \(\alpha_0\). Its rejection
probability converges to \(0\) if \(\gamma_n\to-\infty\), and to \(1\)
if \(\gamma_n\to+\infty\).
\end{theo}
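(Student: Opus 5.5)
The plan is to reduce everything to the joint limit established in the proof of Theorem~\ref{theo:population-centered-sn}, now allowing $d=d_n$. I will use the decomposition
\[
T_{n,1}^{\mathrm{SN}}(\Delta)
=\frac{\sqrt n\{\Psi_{\beta_n}(\widehat d_n)-\Psi_{\beta_n}(d_n)\}}{\sqrt{\widehat V_n^{(\beta_n)}}}
+\frac{\sqrt n\{M_n-\Delta+b_{\beta_n}(d_n)-\beta_n^{-1}\log(1+e^{-2\beta_n\Delta})\}}{\sqrt{\widehat V_n^{(\beta_n)}}},
\]
which holds because $\Psi_{\beta_n}(d_n)=M_n+b_{\beta_n}(d_n)$.

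\textbf{Step 1: joint convergence of the stochastic pieces.} From the proof of Theorem~\ref{theo:population-centered-sn} I extract the joint statement
\[
\Bigl(\sqrt{n}\{\Psi_{\beta_n}(\widehat d_n)-\Psi_{\beta_n}(d_n)\}\sqrt{\tau_0(1-\tau_0)}/\sigma_n,\ \widehat V_n^{(\beta_n)}/\sigma_n^2\Bigr)\cond (Z,\ D^2/(\tau_0(1-\tau_0))),
\]
together with $\widehat\tau_n\conp\tau_0$. The ingredients are:
\begin{itemize}[leftmargin=2em]
\item The Taylor expansion \eqref{eq:taylor-expansion}, with remainder $\sqrt n R_{n,\beta_n}=O_p(\beta_n/\sqrt n)=o_p(1)$ by (A3).
\item Linearization of $\ell_n(\widehat d_n-d_n)$ via (A1) and (A2).
\item Replacing $D\Psi_{\beta_n,\widehat d_n}$ by $\ell_n$ in the selfnormalizer. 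The error here is of order $\beta_n\|\widehat d_n-d_n\|_\infty=o_p(1)$ in operator norm, and this must be divided by $\sigma_n$, which is permitted by (A4).
\item The scalar processes $\ell_n(\mathbb U_n(\cdot))/\sigma_n$. These converge to a standard Brownian motion because $\ell_n$ are functionals of norm at most $1$ whose variances are normalized. Uniformity in $n$ is obtained by passing to subsequences along which $\ell_n$ converge weak-* (tightness via Banach--Alaoglu on a separable space, combined with the continuous mapping theorem in the $\Cspace$-valued invariance principle).
\end{itemize}
The left-bridge and right-bridge contributions yield the independent bridges $\mathbb B_1^0,\mathbb B_2^0$, and the increments over the full segments yield $Z$, independent of both bridges. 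Nothing in these steps used $d$ fixed beyond boundedness of $M_n$ and $\liminf M_n>0$. The latter is needed so that (A2) and the consistency of $\widehat\tau_n$ remain valid; I would state this explicitly as being inherited from the assumptions for $d_n$.

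\textbf{Step 2: the deterministic term.} Divided by $\sqrt{\widehat V_n^{(\beta_n)}}$, the second summand equals
\[
\gamma_n\cdot\frac{\sigma_n}{\sqrt{\tau_0(1-\tau_0)\widehat V_n^{(\beta_n)}}}.
\]
By Slutsky's theorem and Step 1, $T_{n,1}^{\mathrm{SN}}(\Delta)\cond (Z+\gamma)/H_{\tau_0}$ for finite $\gamma$, after checking that $D^2/(\tau_0(1-\tau_0))$ normalized matches $H_{\tau_0}^2$. Since $\widehat\tau_n\conp\tau_0$ and $\tau\mapsto q_{1-\alpha_0}(\tau)$ is continuous, the random critical value may be replaced by $q_{1-\alpha_0}(\tau_0)$. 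This continuity follows from continuity of $\tau\mapsto\mathcal T_\tau$ in distribution and from strict monotonicity of its distribution function, $\mathcal T_\tau$ having a positive density.

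The portmanteau theorem then applies because $(Z+\gamma)/H_{\tau_0}$ has a continuous distribution. Conditioning on the bridges, which are independent of $Z$, gives
\[
\E[1-\Phi(q_{1-\alpha_0}(\tau_0)H_{\tau_0}-\gamma)].
\]
If $\gamma_n\to\pm\infty$, the statistic diverges in probability to $\pm\infty$ because $\widehat V_n^{(\beta_n)}/\sigma_n^2$ is tight and bounded away from zero in probability. The rejection probability therefore tends to $1$ or $0$.

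\textbf{Step 3: level.} The bound \eqref{eq:bias-bound} follows from \eqref{eq:explicit-bias} and the monotonicity of $M\mapsto M-\Delta+\beta_n^{-1}\log(1+e^{-2\beta_nM})$ on $[0,\Delta]$. That function has derivative $1-2e^{-2\beta_n M}/(1+e^{-2\beta_n M})=\tanh(\beta_n M)\ge0$. Hence $\gamma_n\le0$ under $H_0$. Passing to subsequences along which $\gamma_n$ converges in $[-\infty,0]$, the limiting rejection probability is at most its value at $\gamma=0$, which equals $\alpha_0$ by definition of $q_{1-\alpha_0}$.

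The main obstacle is Step 1's uniformity: showing that the scalar projections $\ell_n(\mathbb U_n)/\sigma_n$ converge jointly with the bridges when $\ell_n$ changes with $n$, and that the plug-in error in the selfnormalizer is negligible relative to $\sigma_n$. I would handle this by the subsequence and weak-* compactness argument, using (A4) throughout.
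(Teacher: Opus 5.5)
Your proposal is correct and follows essentially the paper's route. The paper treats this theorem as a direct corollary of the proof of Theorem~\ref{theo:population-centered-sn}: it uses the oracle reduction of Lemma~\ref{lem:oracle-reduction} and the subsequence/Banach--Alaoglu argument of Lemma~\ref{lem:projected-sn}, now with $d=d_n$. It then splits the statistic into the stochastic term plus $\gamma_n$ divided by the rescaled selfnormalizer, exactly as written out in the proof of Theorem~\ref{theo:bias-corrected-local}, and gets the level claim from \eqref{eq:bias-bound} via the monotonicity you verified with the $\tanh(\beta_n M)$ derivative.
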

Unfortunately the bound \eqref{eq:bias-bound} is an equality only when $d(\cdot)$ is constant. This typically leads to a loss of power against non-constant (i.e. $d(\cdot)$ not being a constant function) local alternatives of (for instance) size $n^{-1/2}$ as $\gamma$ may be equal to $-\infty$ in this case. In the next subsections we develop more explicit bias expansions that will allow us to solve this problem. We close the subsection with a small technical remark: Theorem \ref{theo:conservative-bias-correction} implicitly makes use of the continuity $q_{1-\alpha_0}(\cdot)$, which follows from the continuity of the cdf of $Z$ and a simple conditioning argument and the dominated convergence theorem.

\paragraph{An explicit bias expansion: finitely many extrema}
Finding a statistically fruitful expansion for
\[
\log\Big(\sum_{s \in \{-1,1\}}\int_0^1\exp(\beta_n(sf(t)-M))\Big)
\]
into estimable parameters is a non-trivial task. We will use Laplace's method which provides expansions for integrals of the form
\[
    \int_a^b\exp(\beta f(t))dt
\]
provided that $\beta$ is large and $f$ sufficiently regular. The method has a long history in statistics, starting with the original paper by \cite{Laplace:1986}. It is used in various contexts such as model selection  \cite{schwarz:1978}, posterior moment approximation \cite{Tierney:kadane:1986} and in classification via Gaussian processes \cite{Williams:Barber:1998} to name just a few.

The key idea is that near a global extremum (say a maximum) $x_0$ of $f$ we may write 
\[
 f(x)=f(x_0)-\frac{|f''(x_0)|(x-x_0)^2}{2}+O((x-x_0)^3)
\]
where we use that the second derivative has to be non-positive as $x_0$ is a maximal point. Consequently
\[
    \int_{x_0-\delta}^{x_0+\delta}\exp(\beta f(t))dt\simeq \exp(\beta f(x_0))\int_{x_0-\delta}^{x_0+\delta}\exp\Big(-\frac{\beta|f''(x_0)|(t-x_0)^2}{2}\Big)dt.
\]
For large $\beta$ this integral is essentially a Gaussian integral over the whole real line, yielding
\[
    \int_{x_0-\delta}^{x_0+\delta}\exp(\beta f(t))dt\simeq \sqrt{\frac{2\pi}{\beta |f''(x_0)|}}\exp(\beta f(x_0)).
\]
Consequently, if the set of extremal points of the contrast function $d$ is finite, contained in the interior of [0,1] and contains only locally quadratic maxima we may use a more quantitative version of this argument to obtain the following Theorem 

\begin{theo}[Laplace expansion of the soft-maximum bias]
\label{theo:laplace-bias}
Let \(d\in C^4([0,1])\), $M=\|d\|_\infty>0$ and suppose that  
\[
\mathcal E(d)
=
\left\{
t\in[0,1]:|d(t)|=M
\right\}
=
\{t_1,\ldots,t_m\}
\subset(0,1),
\]
is a finite set. For \(j=1,\ldots,m\), define
\[
s_j=\operatorname{sgn}\{d(t_j)\}\in\{-1,1\},
\]
and suppose that each extremal point is nondegenerate, that is,
\[
d'(t_j)=0,
\qquad
\lambda_j:=-s_jd''(t_j)>0,\qquad  j=1,...,m.
\]
Furthermore, let
\[
\gamma_j=s_jd'''(t_j),
\qquad
\kappa_j=s_jd''''(t_j),
\]
and define
\[
S_0(d)
=
\sum_{j=1}^m\lambda_j^{-1/2}, c_j
=
\frac{\kappa_j}{8\lambda_j^2}
+
\frac{5\gamma_j^2}{24\lambda_j^3},
\qquad
S_1(d)
=
\sum_{j=1}^m\lambda_j^{-1/2}c_j.
\]
Then, as \(\beta\to\infty\),
\[
\int_0^1
\left\{
e^{\beta d(t)}+e^{-\beta d(t)}
\right\}\,dt
=
e^{\beta M}
\sqrt{\frac{2\pi}{\beta}}
S_0(d)
\left[
1+
\frac{S_1(d)}{\beta S_0(d)}
+
o\left(\frac1\beta\right)
\right].
\]
Consequently,
\begin{align}
\label{eq:discrete-expansion}
    \Psi_\beta(d)
    =
    M+
    \frac{
    -\frac12\log\beta
    +\frac12\log(2\pi)
    +\log S_0(d)
    }{\beta}
    +
    \frac{S_1(d)}{S_0(d)\beta^2}
    +
    o(\beta^{-2}).
\end{align}
In particular, the smoothing bias
\[
b_\beta(d)=\Psi_\beta(d)-M
\]
satisfies
\[
b_\beta(d)
=
-\frac{\log\beta}{2\beta}
+
\frac{
\frac12\log(2\pi)+\log S_0(d)
}{\beta}
+
O(\beta^{-2}).
\]
\end{theo}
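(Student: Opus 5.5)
We plan to prove the statement by a localized second-order Laplace expansion. First, we split $[0,1]$ into disjoint neighbourhoods $I_j=(t_j-\delta,t_j+\delta)\subset(0,1)$ of the extremal points and their complement $K=[0,1]\setminus\bigcup_j I_j$. Since $\mathcal E(d)$ is finite and $|d|$ is continuous on the compact set $K$, we get $\sup_K|d|\le M-\eta$ for some $\eta>0$. On $K$ both exponentials are therefore bounded by $e^{\beta(M-\eta)}$. After dividing by $e^{\beta M}\beta^{-1/2}$, the contribution of $K$ is $O(\sqrt\beta e^{-\beta\eta})=o(\beta^{-1})$, which is exponentially negligible. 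The same argument disposes of the ``wrong sign'' exponential $e^{-s_j\beta d}$ on each $I_j$: there $-s_jd\le -M+\eta'$ once $\delta$ is small, because $d$ is close to $s_jM\neq 0$. It therefore remains to expand $\int_{I_j}e^{\beta s_j d(t)}\,dt$ for each $j$.

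On $I_j$ we Taylor expand $s_jd(t_j+x)=M-\tfrac{\lambda_j}{2}x^2+\tfrac{\gamma_j}{6}x^3+\tfrac{\kappa_j}{24}x^4+r_j(x)$. Because $d\in C^4$, the remainder satisfies $r_j(x)=o(x^4)$ uniformly as $x\to 0$. Next we substitute $x=y/\sqrt{\beta\lambda_j}$ and write
\[
\beta\{s_jd-M\}=-\tfrac{y^2}{2}+\beta^{-1/2}\tfrac{\gamma_j}{6\lambda_j^{3/2}}y^3+\beta^{-1}\tfrac{\kappa_j}{24\lambda_j^2}y^4+\beta\, r_j\bigl(y/\sqrt{\beta\lambda_j}\bigr).
\]
We then expand the exponential of the correction terms as $1+A\beta^{-1/2}y^3+\beta^{-1}(By^4+\tfrac12A^2y^6)+\dots$ and integrate against $e^{-y^2/2}$. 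The odd moments vanish, and with $\int y^4e^{-y^2/2}=3\sqrt{2\pi}$ and $\int y^6e^{-y^2/2}=15\sqrt{2\pi}$ we obtain
\[
\int_{I_j}e^{\beta s_jd}\,dt=e^{\beta M}\sqrt{\tfrac{2\pi}{\beta\lambda_j}}\Bigl[1+\beta^{-1}\Bigl(\tfrac{3\kappa_j}{24\lambda_j^2}+\tfrac{15\gamma_j^2}{72\lambda_j^3}\Bigr)+o(\beta^{-1})\Bigr].
\]
The bracketed coefficient is exactly $c_j$. Summing over $j$ gives the first display with $S_0,S_1$.

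The main obstacle is making the expansion rigorous with only $C^4$ regularity, i.e.\ obtaining $o(\beta^{-1})$ rather than $O(\beta^{-1})$ errors. We handle this with a second split inside $I_j$, at $|x|\le\beta^{-2/5}$ versus $\beta^{-2/5}<|x|<\delta$.
\begin{itemize}
\item On the outer part, shrinking $\delta$ so that $s_jd-M\le-\tfrac{\lambda_j}{4}x^2$ gives a contribution $O(e^{-c\beta^{1/5}})$, which is negligible.
\item On the inner part, $|y|\le C\beta^{1/10}$. Hence $\beta^{-1/2}|y|^3\le C\beta^{-1/5}$, $\beta^{-1}y^4\to0$, and $\beta|r_j|=o(\beta^{-1}y^4)$ uniformly. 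We use $|e^z-1-z-z^2/2|\le |z|^3e^{|z|}$ to bound the error by $\beta^{-3/2}$ times polynomial moments, plus $o(\beta^{-1})$ from $r_j$. The truncated Gaussian moments differ from the full ones by exponentially small amounts.
\end{itemize}

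Finally, we take logarithms. By definition,
\[
\beta\Psi_\beta(d)=\beta M-\tfrac12\log\beta+\tfrac12\log(2\pi)+\log S_0+\log\bigl(1+\tfrac{S_1}{\beta S_0}+o(\beta^{-1})\bigr).
\]
We expand $\log(1+u)=u+O(u^2)$ and divide by $\beta$, which yields \eqref{eq:discrete-expansion}. The bias statement follows by subtracting $M$ and absorbing the $\beta^{-2}$ term into $O(\beta^{-2})$.
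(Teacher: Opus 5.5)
Your proposal is correct and follows essentially the same route as the paper. It uses localization with exponentially negligible complement and wrong-sign terms, a fourth-order Taylor expansion, the rescaling $y=\sqrt{\beta\lambda_j}\,x$, and a second-order expansion of the exponential against Gaussian moments. The truncation of the inner region at $|y|\lesssim\beta^{1/10}$ is an instance of the paper's cutoff $r_\beta=o(\beta^{1/6})$, paired with a quadratic upper bound on the outer region. You actually carry out the remainder control that the paper only sketches, and your bounds there are sound.
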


\subsection{An explicit bias expansion: flat extrema}

In some cases the extrema may be plateaus and not isolated points. A similar idea as in the previous subsection will be fruitful. We define 
\[
    g(t)=M-|d(t)|,
\]
as well as the extreme-volume
\[
    m=\lambda(\mathcal E(d)).
\]
Then, using $\exp(\beta d(t))+\exp(-\beta d(t))=\exp(\beta|d(t)|)+\exp(-\beta|d(t))$, we have by similar (but easier) arguments as in the beginning of the previous subsection that
\[
    \Psi_\beta(d)=M+\frac{\log(m)}{\beta}+o(\beta^{-1}).
\]
For a fruitful correction we will need a more precise form of the $o(\beta^{-1})$ term. To that end we define the near-extreme-volume function
\[
    F(u)=\lambda\{t| \ g(t)\leq u\}, \quad  u \geq 0,
\]
and specify the expansion in terms of its behaviour as $u \to 0$.

\begin{theo}[Bias expansion for an extremal set of positive measure]
\label{theo:plateau-bias}
Let \(d\in C([0,1])\) and suppose that the extremal set
\[
\mathcal E(d)
=
\{t\in[0,1]:|d(t)|=M\}
\]
has positive Lebesgue measure
\[
m=\lambda\{\mathcal E(d)\}>0.
\]
Further suppose that, for some constants \(K>0\) and \(\alpha>0\),
\[
F(u)
=
m+Ku^\alpha+o(u^\alpha),
\qquad u\downarrow0.
\]
Then, as \(\beta\to\infty\),
\[
\int_0^1
\left\{
e^{\beta d(t)}+e^{-\beta d(t)}
\right\}\,dt
=
e^{\beta M}
\left[
m
+
K\Gamma(1+\alpha)\beta^{-\alpha}
+
o(\beta^{-\alpha})
\right].
\]
Consequently,
\begin{align}
\label{eq:smooth-expansion}
    \Psi_\beta(d)
    =
    M
    +
    \frac{\log m}{\beta}
    +
    \frac{K\Gamma(1+\alpha)}{m}
    \beta^{-(1+\alpha)}
    +
    o\left(\beta^{-(1+\alpha)}\right).
\end{align}

In particular, the smoothing bias satisfies
\[
b_\beta(d)
=
\Psi_\beta(d)-M
=
\frac{\log m}{\beta}
+
\frac{K\Gamma(1+\alpha)}{m}
\beta^{-(1+\alpha)}
+
o\left(\beta^{-(1+\alpha)}\right).
\]
\end{theo}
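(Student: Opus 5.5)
The plan is to use a layer-cake (distribution-function) representation of the integral in terms of $g=M-|d|$ and the near-extreme-volume function $F$. First I rewrite the integrand. Since $e^{\beta d}+e^{-\beta d}=e^{\beta|d|}+e^{-\beta|d|}$,
\[
\int_0^1\{e^{\beta d(t)}+e^{-\beta d(t)}\}\,dt
=e^{\beta M}\int_0^1 e^{-\beta g(t)}\,dt+\int_0^1 e^{-\beta|d(t)|}\,dt .
\]
The second term is at most $1$. It is therefore $e^{\beta M}\,O(e^{-\beta M})$, which is $o(e^{\beta M}\beta^{-\alpha})$ because $M>0$.

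The main term is $I(\beta):=\int_0^1 e^{-\beta g(t)}dt$. Since $g\ge 0$ and $F$ is the distribution function of $g$ under Lebesgue measure on $[0,1]$, we have $I(\beta)=\int_{[0,\infty)}e^{-\beta u}\,dF(u)$. Integrating by parts, or equivalently using $e^{-\beta u}=\int_u^\infty\beta e^{-\beta v}dv$ with Fubini, gives
\[
I(\beta)=\beta\int_0^\infty e^{-\beta v}F(v)\,dv
=m+\beta\int_0^\infty e^{-\beta v}\{F(v)-m\}\,dv .
\]
Here $F(v)=1$ for $v\ge \|g\|_\infty$ and $0\le F-m\le 1$.

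This is now a standard Abelian/Watson-lemma argument. Fix $\varepsilon>0$ and choose $\delta>0$ such that $|F(v)-m-Kv^\alpha|\le\varepsilon v^\alpha$ on $[0,\delta]$. I split the integral at $\delta$.
\begin{itemize}
\item On $[0,\delta]$, the $Kv^\alpha$ part gives $K\beta\int_0^\delta v^\alpha e^{-\beta v}dv=K\Gamma(1+\alpha)\beta^{-\alpha}-O(e^{-\beta\delta/2})$.
\item Still on $[0,\delta]$, the error part is bounded by $\varepsilon\Gamma(1+\alpha)\beta^{-\alpha}$.
\item The tail $[\delta,\infty)$ is bounded by $e^{-\beta\delta}$, which is exponentially small.
\end{itemize}
Letting $\beta\to\infty$ and then $\varepsilon\to0$ yields $I(\beta)=m+K\Gamma(1+\alpha)\beta^{-\alpha}+o(\beta^{-\alpha})$. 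This proves the first display.

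For \eqref{eq:smooth-expansion}, I take logarithms and divide by $\beta$:
\[
\Psi_\beta(d)=M+\frac{1}{\beta}\log m+\frac1\beta\log\Bigl(1+\tfrac{K\Gamma(1+\alpha)}{m}\beta^{-\alpha}+o(\beta^{-\alpha})\Bigr).
\]
Applying $\log(1+x)=x+O(x^2)$ with $x=O(\beta^{-\alpha})\to0$ gives the $\beta^{-(1+\alpha)}$ term with remainder $o(\beta^{-(1+\alpha)})$. The statement about $b_\beta(d)$ follows immediately.

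The only delicate step is the uniform control of $F(v)-m$ near zero in the Abelian argument, that is, the $\varepsilon$–$\delta$ splitting. Since the hypothesis is just an asymptotic statement at $0$ and $F$ is bounded, no further regularity is needed, so this step should be routine.
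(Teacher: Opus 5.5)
Your proposal is correct and follows essentially the same route as the paper's proof. Both arguments rewrite the integrand through $|d|$, express $\int_0^1 e^{-\beta g}$ via the distribution function $F$ (with its atom $m$ at zero), integrate by parts, and split at $\delta$ in Watson-lemma fashion before expanding the logarithm. The differences are cosmetic: you extend the integral to $[0,\infty)$ to avoid the boundary term $e^{-\beta M}(1-m)$, and you use an explicit $\varepsilon$--$\delta$ bound where the paper invokes dominated convergence.
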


\begin{rem}[The isolated-extrema case as a special case]
\label{rem:isolated-as-volume-case}
The near-extreme-volume formulation also contains the case of
isolated extremal points, although in that case
\[
m=\lambda\{\mathcal E(d)\}=0
\]
and the expansion in Theorem~\ref{theo:plateau-bias} cannot be used
by simply setting \(m=0\).

More generally, suppose that
\[
F(u)
=
Ku^\alpha+o(u^\alpha),
\qquad u\downarrow0,
\]
for some \(K>0\) and \(\alpha>0\). The same argument as in the proof
of Theorem~\ref{theo:plateau-bias} gives
\[
\int_0^1
\left\{
e^{\beta d(t)}+e^{-\beta d(t)}
\right\}\,dt
=
e^{\beta M}
K\Gamma(1+\alpha)\beta^{-\alpha}
\{1+o(1)\},
\]
and therefore
\[
\Psi_\beta(d)
=
M+
\frac{
-\alpha\log\beta
+
\log\{K\Gamma(1+\alpha)\}
}{\beta}
+
o(\beta^{-1}).
\]

In particular, under the assumptions of
Theorem~\ref{theo:laplace-bias}, a quadratic expansion around the
isolated extremal points gives
\[
F(u)
=
2\sqrt{2}\,S_0(d)\,u^{1/2}
+
o(u^{1/2}).
\]
Thus
\[
\alpha=\frac12,
\qquad
K=2\sqrt{2}\,S_0(d),
\]
and, since
\[
\Gamma\left(\frac32\right)=\frac{\sqrt\pi}{2},
\]
we obtain
\[
K\Gamma\left(\frac32\right)
=
\sqrt{2\pi}\,S_0(d).
\]
Consequently,
\[
\Psi_\beta(d)
=
M+
\frac{
-\frac12\log\beta
+\frac12\log(2\pi)
+\log S_0(d)
}{\beta}
+
o(\beta^{-1}),
\]
which is precisely the first-order part of
Theorem~\ref{theo:laplace-bias}. Hence the isolated-extrema and
positive-measure cases are both governed by the local behavior of
the near-extreme-volume function \(F\) at zero.
\end{rem}

\subsection{Correcting the bias}
Returning to the bias correction we observe that for sufficiently quickly growing $\beta_n$ we hence only need to concern ourselves with the term
\[
    -\frac{\log\beta_n}{2\beta_n}
+
\frac{
\frac12\log(2\pi)+\log S_0(d)
}{\beta_n}
\]
in the finitely many extrema case, and with the term
\[
    \log(m)\beta^{-1}
\]
in the plateau case. This is because the remainders are $O(\beta_n^{-2})$ and $O(\beta_n^{-(1+\alpha)})$, respectively, so that upon choosing $\beta_n$ moderately large they are $o(n^{-1/2})$. Estimating the constants $S_0(d)$ and $m$ is hairy business. For instance, focusing on $S_0(d)$, we need to estimate the number of extremal points, their sign as well as the second derivative at these points. Estimating the second derivative of  $d \in C^4[0,1]$ at rate $n^{-1/2}$ is possible when the observations are available on a sufficiently regular grid with at least (up to log factors) $n^{3/4}$ points (see \cite{Berger:Holzmann:2026}). In practice this approach is somewhat less appetizing due to two reasons: 1) Estimated change points near the beginning or end of the time series drastically reduce the effective sample size used to estimate $d$ and its derivative, 2) the estimation involves additional tuning parameters. Trying to estimate $\log(m)$ is no easier and differentiating between the two settings in a statistically sound way that is not dependent on a number of hyperparameters seems exceptionally difficult. \\

We therefore opt to instead employ a trick that is often used in the context of kernel density estimation (see \cite{Schucany:Sommers:1977}): We choose different sequences of the parameter $\beta_n$ and take a linear combination of the corresponding estimators $\Psi_{\beta_n}(\hat d_n)$ that preserves the zeroth order but cancels the first order bias terms. While the expansions \eqref{eq:discrete-expansion} and \eqref{eq:smooth-expansion} are structurally different it is nonetheless possible to cancel the biases with a three-term linear combination. To be precise we define
\[
    \mathcal L_{\beta_n}(\cdot, \mathbf a, \mathbf c)=a_1\Psi_{c_1\beta_n}(\cdot)+a_2\Psi_{c_2\beta_n}(\cdot)+a_3\Psi_{c_3\beta_n}(\cdot).
\]
for any $\mathbf a=(a_1,a_2,a_3) \in \R^3$ and $\mathbf c=(c_1,c_2,c_3) \in \R^3_+$. Similarly we define the associated selfnormalizer $\widehat{\mathcal V}_n^{\beta}$ by replacing the derivative in \eqref{eq:defin-Wn} by the derivative of $\mathcal{L}_{\beta_n}$, i.e.
\begin{align}
        \widehat{\mathcal{W}}_n^{(\beta)} 
        &=
        \frac1{n^2}
        \left[
        \sum_{q=1}^{\widehat k_n}
        \left\{
        D\mathcal L_{\beta,\widehat d_n}
        \left(B_q^{L}\right)
        \right\}^2
        +
        \sum_{q=\widehat k_n+1}^{n}
        \left\{
        D\mathcal L_{\beta,\widehat d_n}
        \left(B_q^{R}\right)
        \right\}^2
        \right],\\ 
        \widehat{\mathcal V}_n^{\beta}
        &=
        \left\{
        \frac{n^2}{\widehat k_n(n-\widehat k_n)}
        \right\}
        \widehat{\mathcal{W}}_n^{(\beta)} .
\end{align}
We want to choose these constants in such a way that the first order bias cancels while keeping the remainder in \eqref{eq:taylor-expansion} as small as possible. By the results in Lemma \ref{lem:softmax-differentiability} the upper bound on this remainder is proportional to $\beta_n\sum_{i=1}^3|a_i|c_i$ and we will hence try to minimize this objective over $(\mathbf a,\mathbf c)\in \R^3\times\R^3_+$ under the constraint that the first order bias vanishes. Because we require $\beta_n\to \infty$ we impose the additional constraint that $\min c_i\geq 1$, otherwise homogeneity would allow picking $c_i$ arbitrarily close to 0, thereby blowing up the  higher order bias terms. 

To ensure that the zeroth order bias is unchanged while the first order bias cancels some simple algebra yields the conditions
\[
    \sum _i a_i=1,\quad \sum_i\frac{a_i}{c_i}=0, \quad \sum_i \frac{a_i\log c_i}{c_i}=0, \ c_i\geq 1.
\]
Writing $\mathbf c=(1,x,y), \ 1<x <y$ we obtain a linear equation in $\mathbf a$.  Solving it gives
\[
    a_1=\frac{\log(x/y)}{D(x,y)},\quad a_2=\frac{x\log(y)}{D(x,y)}, \quad a_3=-\frac{y\log(x)}{D(x,y)}, \quad D(x,y)=(1-y)\log x+(x-1)\log y.
\]
From here some numerical optimization yields  that the minimizers $\mathbf a_0$ and $\mathbf{c}_0$ are approximately given by
\[
    \mathbf a_0 \approx (1.2509,-2.2555, 2.0046), \quad \mathbf c_0\approx(1,1.4475,6.5232).
\]
By the imposed constraints this then yields
\[
    \mathcal L_{\beta_n}(d):=\mathcal L_{\beta_n}(d,\mathbf a_0, \mathbf c_0)=M+o(n^{-1/2})
\]
in either case provided that $\beta_n$ is sufficiently large. The same arguments yielding Theorem \ref{theo:population-centered-sn} then also yield
\begin{theo}[Selfnormalized limit with bias correction]
\label{theo:bias-corrected-sn}
Suppose that Assumptions~\ref{ass:population-centering} hold. Further suppose that for $M>0$ one of the following two conditions holds:
\begin{itemize}
    \item[(1)] $\sqrt{n}\beta_n^{-2}=o(1)$ and the assumptions of Theorem \ref{theo:laplace-bias} hold
    \item[(2)] $\sqrt{n}\beta_n^{-(1+\alpha)}=o(1)$ and the assumptions of Theorem \ref{theo:plateau-bias} hold
\end{itemize} 
In either case
\[
T_{n,2}^{SN}(M):=
\frac{
\sqrt n\{\mathcal L_{\beta_n}(\widehat d_n)-M\}
}{
\sqrt{\widehat{\mathcal{V}}_n^{(\beta_n)}}
}
\cond
\mathcal T_{\tau_0}.
\]
In particular we have that 
\begin{align}
    \lim_n\Pr(T_{n,2}^{SN}(\Delta)>q_{1-\alpha_0}(\hat \tau_n))=\begin{cases}
        0, \quad &M<\Delta,\\
        \alpha_0, \quad &M=\Delta,\\
        1, \quad &M>\Delta.
    \end{cases}
\end{align}
\end{theo}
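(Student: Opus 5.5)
The plan is to reduce Theorem \ref{theo:bias-corrected-sn} to the argument behind Theorem \ref{theo:population-centered-sn}, applied to the linear combination $\mathcal L_{\beta_n}$ instead of $\Psi_{\beta_n}$. Write
\[
\sqrt n\{\mathcal L_{\beta_n}(\widehat d_n)-M\}
=\sqrt n\{\mathcal L_{\beta_n}(\widehat d_n)-\mathcal L_{\beta_n}(d)\}+\sqrt n\{\mathcal L_{\beta_n}(d)-M\}.
\]
The deterministic second term is handled first. In case (1), apply the expansion \eqref{eq:discrete-expansion} of Theorem \ref{theo:laplace-bias} to each $\Psi_{c_i\beta_n}(d)$. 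The constraints $\sum a_i=1$, $\sum a_i/c_i=0$ and $\sum a_i\log c_i/c_i=0$ remove the zeroth-order coefficient, the $\log\beta_n/\beta_n$ term and the $\{\tfrac12\log(2\pi)+\log S_0(d)\}/\beta_n$ term. For the $\log\beta_n$ term, note that $\log(c_i\beta_n)=\log c_i+\log\beta_n$, so two constraints are needed. What remains is $O(\beta_n^{-2})$, which is $o(n^{-1/2})$ because $\sqrt n\beta_n^{-2}\to0$. In case (2), use \eqref{eq:smooth-expansion} of Theorem \ref{theo:plateau-bias}: the $\log m/(c_i\beta_n)$ terms cancel by $\sum a_i/c_i=0$, and the remainder is $O(\beta_n^{-(1+\alpha)})=o(n^{-1/2})$.

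For the stochastic term, the key step is to redo the linearisation \eqref{eq:taylor-expansion} for each $\Psi_{c_i\beta_n}$. The remainder bound proportional to $c_i\beta_n\|\widehat d_n-d\|_\infty^2$ from Lemma \ref{lem:softmax-differentiability}, together with $\|\widehat d_n-d\|_\infty=O_p(n^{-1/2})$ and (A3), gives
\[
\sqrt n\{\mathcal L_{\beta_n}(\widehat d_n)-\mathcal L_{\beta_n}(d)\}=\sqrt n\, D\mathcal L_{\beta_n,d}(\widehat d_n-d)+o_p(1).
\]
The rate of $\|\widehat d_n-d\|_\infty$ follows from (A1) and (A2), exactly as in the proof of Theorem \ref{theo:population-centered-sn}. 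Set $\tilde\ell_n=D\mathcal L_{\beta_n,d}=\sum a_i D\Psi_{c_i\beta_n,d}$. This is a continuous linear functional whose operator norm is bounded by $\sum|a_i|$, since each $D\Psi$ has norm at most $1$. Exactly as before, $\tilde\ell_n$ applied to the CUSUM and bridge processes yields, jointly, a scalar partial-sum process driven by $\tilde\ell_n(\mathbb B)$. The numerator is asymptotically $\tilde\sigma_n Z/\sqrt{\tau_0(1-\tau_0)}$. The normalizer $\widehat{\mathcal V}_n^{(\beta_n)}$ is asymptotically $\tilde\sigma_n^2 D^2/(\tau_0(1-\tau_0))$. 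Here one must also replace $D\mathcal L_{\beta_n,\widehat d_n}$ by $D\mathcal L_{\beta_n,d}$, using a Lipschitz bound of order $\beta_n\|\widehat d_n-d\|_\infty=o_p(1)$ on the derivative. Since everything is linear in $\tilde\ell_n$, the factor $\tilde\sigma_n$ cancels in the ratio, giving $\mathcal T_{\tau_0}$ along subsequences and hence overall.

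The main obstacle is nondegeneracy: (A4) is stated for $\ell_n=D\Psi_{\beta_n,d}$, not for the combination $\tilde\ell_n$, which could in principle have vanishing variance. I would argue that as $\beta\to\infty$ each $D\Psi_{c_i\beta,d}$ converges weakly-* to the same limiting measure on $\mathcal E(d)$. For isolated extrema this is the weights $\lambda_j^{-1/2}s_j\delta_{t_j}/S_0$; on a plateau it is the normalized signed Lebesgue measure. Consequently $\tilde\ell_n(\mathbb B(1))$ and $\ell_n(\mathbb B(1))$ have asymptotically the same law because $\sum a_i=1$, and $\liminf\tilde\sigma_n^2=\liminf\sigma_n^2>0$. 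This needs uniform integrability, which follows from $\E\|\mathbb B(1)\|_\infty^2<\infty$ together with the bounded norms. If this limit argument proves delicate, I would instead assume (A4) directly for $\tilde\ell_n$.

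Finally, the power statement follows by writing $T_{n,2}^{SN}(\Delta)=T_{n,2}^{SN}(M)+\sqrt n(M-\Delta)/\sqrt{\widehat{\mathcal V}_n}$. The consistency $\widehat\tau_n\to\tau_0$ and the continuity of $q_{1-\alpha_0}(\cdot)$ then give the three cases. The normalizer is $O_p(1)$ and bounded away from zero with probability tending to one, so the shift diverges to $\pm\infty$ when $M\neq\Delta$ and is zero when $M=\Delta$.
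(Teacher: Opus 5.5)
Your argument for $M>0$ matches the paper's proof step by step: bias cancellation via the three constraints, the oracle reduction with constants $A_0=\sum_i|a_i|$ and $A_1=\sum_i|a_i|c_i$ in place of $1$, nondegeneracy of $\ell_n^{\mathcal L}$ from the common $c$-independent pointwise limit of $D\Psi_{c\beta_n,d}$ together with $\sum_i a_i=1$, and the Slutsky decomposition for the power statement. The genuine gap is the case $M=0$. It belongs to the assertion that the rejection probability tends to $0$ for $M<\Delta$, and your proposal never treats it.

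Your final step writes $T_{n,2}^{SN}(\Delta)=T_{n,2}^{SN}(M)+\sqrt n(M-\Delta)/\{\widehat{\mathcal V}_n^{(\beta_n)}\}^{1/2}$. It relies on three facts: $T_{n,2}^{SN}(M)=O_p(1)$, $\widehat\tau_n\conp\tau_0$, and a normalizer bounded away from zero. None of these is available at $d=0$:
\begin{itemize}
\item conditions (1)--(2) and Theorems~\ref{theo:laplace-bias}--\ref{theo:plateau-bias} presuppose $M>0$;
\item (A2) and (A4) are imposed only for $M>0$, and without a break $\tau_0$ has no meaning;
\item since $D\Psi_{\beta,0}\equiv0$, the functional $\ell_n^{\mathcal L}$ vanishes, the projected variance is zero, and the pivotal limit from the first part cannot be invoked.
\end{itemize}
This is why the paper handles $M=0$ separately and shows $T_{n,2}^{SN}(\Delta)\to-\infty$ directly.

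The ingredient you are missing is boundary control of the break estimator: for every $\epsilon>0$ there is $b>0$ with $\Pr(\widehat\tau_n\in[b,1-b])\ge1-\epsilon$. The paper cites Bastian (2025) for this; it reflects the fact that the argmax of the CUSUM norm of pure noise stays in the interior. On this event:
\begin{itemize}
\item (A1) gives $\norminf{\widehat d_n}=O_p(n^{-1/2})$;
\item $\mathcal L_{\beta_n}$ is $A_0$-Lipschitz and $\mathcal L_{\beta_n}(0)=\beta_n^{-1}\log 2\sum_i a_i/c_i=0$, so the numerator equals $-\sqrt n\Delta+O_p(1)\to-\infty$;
\item the bridges are uniformly $O_p(\sqrt n)$, the derivative norms are at most $A_0$, and $n^2/\{\widehat k_n(n-\widehat k_n)\}$ is bounded by a constant depending on $b$, so $\widehat{\mathcal V}_n^{(\beta_n)}=O_p(1)$.
\end{itemize}
Without this control the argument genuinely fails. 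For example, $\widehat k_n=1$ would make $\widehat d_n$ essentially $\epsilon_1$, which does not vanish.
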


\begin{rem}{On boundary extrema, the choice of $\beta_n$ and the size of the quadratic remainder}
\label{rem:boundaries-choices-remainders}
    \begin{itemize}
        \item[(1)] Similar arguments can also be used to treat the case where the extrema are located at the boundary of the domain, we merely omitted them for the sake of presentational brevity. 
        \item[(2)] A version of this statement tailored towards local alternatives can be found in Appendix \ref{sec:local-alternative}. It shows that, different from the test \eqref{eq:conservative-bias-correction}, the bias-corrected test enjoys non-trivial power against any local alternative with a fixed direction.
        \item[(3)] The bias terms benefit from choosing $\beta_n$ as large as possible whereas the remainder in the Taylor expansion \eqref{eq:taylor-expansion} grows as $\beta_nn^{-1}$. For isolated extrema this indicates an optimal choice $\beta_n \simeq n^{1/3}$ whereas for plateaus the optimal choice is $\beta_n \simeq n^{\frac{1}{2+\alpha}}$. The supremum norm is usually employed when extrema are sparse (otherwise using the theoretically and computationally less demanding $L^2$ norm is a viable alternative - even when one is interested in quantifying the size of the change, as the norms are extremely similar for suitably flat functions) so that we recommend using $\beta_n=n^{1/3}$ in practice. As we will see in the simulation section the proposed method is somewhat robust with respect to the choice of $\beta_n$, too. 
        \item[(4)] The quadratic remainder in the expansion \eqref{eq:taylor-expansion} applied to $\sqrt{n}(\mathcal L_{\beta_n}(\hat d)-\mathcal L_{\beta_n}(d))$, call it $\tilde R_{n,\beta_n}$, is asymptotically negligible but may still be relevant in finite samples. The inequality
        \[
            |\tilde R_{n,\beta_n}|\leq \frac{\sqrt{n}\beta_n}{2} \sum_{i=1}^3|a_i|c_i\|\hat d_n-d\|^2_\infty\simeq \sqrt{n}\frac{17.59\beta_n}{2}\|\hat d_n-d\|^2
        \]
        gives an upper bound that, depending on the geometry of the problem, may be (close to) equality. The rightmost term can be fairly large for smaller $n$ and we therefore propose a further (asymptotically negligible) adjustment given by
        \[
            \sqrt{n}\hat r_n^2=\sqrt{n}\frac{17.59\beta_n}{2}\Big(0.1\hat \sigma\frac{\log(n)}{\sqrt{n}}\Big)^2
        \]
        where $\hat \sigma=\max_{t \in [0,1]}\hat \sigma(t)$ is the maximum of the pointwise empirical standard deviations (calculated from the residuals). The bracketed term clearly stochastically dominates $\|\hat d_n-d\|^2$ when (A1) holds, so that the resulting procedure is more conservative.  As long as $\beta_n=o(n^{1/2}\log^{-2}(n))$ this does not have an asymptotic impact while improving the level approximation for smaller sample sizes. The resulting statistic then has the form
        \[
            \frac{\sqrt{n}\mathcal (L_{\beta_n}(\hat d_n)-\Delta-\hat r_n^2)}{\sqrt{\mathcal{V}_{n}^{\beta_n}}}.
        \]
    \end{itemize}
    
\end{rem}

\section{Finite Sample Study}
In this section we will compare the size and power of the proposed test with that of \cite{dette:kokot:aue:2020}. We generate data according to the scheme
\[
    X_i(t)=\epsilon_i(t)+d(t)1\{i>n/2\},\quad i=1,...,n.
\]
i.e. $\tau_0=1/2$. Here $d(\cdot)\in C([0,1])$ determines the size and shape of the break and is given by
\begin{align}
    d_{1}(t,a)=4at(1-t), \quad d_{2}(t,a)=a\min(2t,1),
\end{align}
i.e. one setting with an isolated maximum and one with a plateau. We will test the hypotheses \eqref{eq:hypotheses-defin} for $\Delta=0.5$ so that $a\leq 0.5$ corresponds to $H_0(0.5)$ and $a>0.5$ to $H_1(0.5)$. The error process will be given by 
\[
    \epsilon_i=z_\rho(\rho\epsilon_{i-1}+\eta_i)
\]
where $\eta_i$ are iid realisations of a Brownian bridge, $\rho$ determines the strength of dependence and $z_\rho=\frac{1}{1+\rho}$ is a standardization factor that ensures that the long-run variance of $\epsilon_i(1/2)$ is $1/4$. We use a burn-in sample of size $200$ that is discarded for testing. We perform our tests at level $\alpha_0=0.05$ with
\begin{align}
    n \in \{100,200,400\}, \quad a \in \{0.25,0.5,0.75\}, \rho\in\{0,0.25,0.5,0.75\}.
\end{align}
To apply the methods from \cite{dette:kokot:aue:2020} we need to choose a bandwidth for the bootstrap and a constant $c_n$ for their extremal set estimator. We follow their choice of $c_n=0.1\log(n)$ and choose the bootstrap bandwidth with the method in \cite{rice:shang:2017}. This is a method to estimate the bandwidth for a long-run covariance estimator which is closely related to the bootstrap bandwidth. We use the therein recommended choices: pilot bandwidth $n^{1/5}$, flat-top pilot kernel and a Bartlett kernel afterwards. Test decisions are based on 500 bootstrap samples. We apply our bias-corrected test based on $T_{n,2}^{SN}(1/2)$ with $\beta_n=cn^{1/3},\ c \in \{1,2,3\}$ and the additional bias correction in Remark \ref{rem:boundaries-choices-remainders}. All empirical rejection rates we report in the following are based on 1000 samples and all involved functions are sampled on an equidistant grid with 101 points. We use the break-point estimator $\hat \tau_n$ given in \cite{dette:kokot:aue:2020} for both tests. The critical values for the selfnormalized test are calculated by simulating the respective distribution $\mathcal T_{\tau_n}$ 50000 times. \\

\begin{table}[H]
\centering
\caption{Empirical rejection probabilities for the isolated maximum design.}
\label{tab:rejection-isolated}
\begin{adjustbox}{max width=\textwidth}
\begin{tabular}{cc cccc|cccc|cccc|cccc}
\toprule
$\rho$ & $a$ & \multicolumn{4}{c}{${n=100}$} & \multicolumn{4}{c}{${n=200}$} & \multicolumn{4}{c}{${n=400}$} \\
\cmidrule(lr){3-6} \cmidrule(lr){7-10} \cmidrule(lr){11-14}
 &  & $\mathrm{SN}_{1}$ & $\mathrm{SN}_{2}$ & $\mathrm{SN}_{3}$ & \textsc{DKA} & $\mathrm{SN}_{1}$ & $\mathrm{SN}_{2}$ & $\mathrm{SN}_{3}$ & \textsc{DKA} & $\mathrm{SN}_{1}$ & $\mathrm{SN}_{2}$ & $\mathrm{SN}_{3}$ & \textsc{DKA} \\
\midrule
\multirow{3}{*}{0.00} & 0.25 & 0.000 & 0.000 & 0.000 & 0.000 & 0.000 & 0.000 & 0.000 & 0.000 & 0.000 & 0.000 & 0.000 & 0.000 \\
 & \textbf{0.50} & 0.097 & 0.074 & 0.051 & 0.112 & 0.063 & 0.043 & 0.026 & 0.102 & 0.062 & 0.040 & 0.022 & 0.079 \\
 & 0.75 & 0.806 & 0.749 & 0.681 & 0.898 & 0.941 & 0.918 & 0.887 & 0.986 & 0.998 & 0.997 & 0.993 & 1.000 \\
\midrule
\multirow{3}{*}{0.25} & 0.25 & 0.000 & 0.000 & 0.000 & 0.000 & 0.000 & 0.000 & 0.000 & 0.000 & 0.000 & 0.000 & 0.000 & 0.000 \\
 & \textbf{0.50} & 0.100 & 0.086 & 0.072 & 0.142 & 0.072 & 0.059 & 0.049 & 0.114 & 0.066 & 0.052 & 0.036 & 0.088 \\
 & 0.75 & 0.817 & 0.791 & 0.766 & 0.933 & 0.951 & 0.943 & 0.935 & 0.987 & 0.996 & 0.996 & 0.993 & 1.000 \\
\midrule
\multirow{3}{*}{0.50} & 0.25 & 0.000 & 0.000 & 0.000 & 0.000 & 0.000 & 0.000 & 0.000 & 0.000 & 0.000 & 0.000 & 0.000 & 0.000 \\
 & \textbf{0.50} & 0.098 & 0.094 & 0.084 & 0.145 & 0.077 & 0.070 & 0.056 & 0.120 & 0.071 & 0.056 & 0.042 & 0.090 \\
 & 0.75 & 0.850 & 0.832 & 0.817 & 0.930 & 0.958 & 0.952 & 0.946 & 0.996 & 0.997 & 0.997 & 0.996 & 0.999 \\
\midrule
\multirow{3}{*}{0.75} & 0.25 & 0.000 & 0.000 & 0.000 & 0.000 & 0.000 & 0.000 & 0.000 & 0.000 & 0.000 & 0.000 & 0.000 & 0.000 \\
 & \textbf{0.50} & 0.094 & 0.090 & 0.086 & 0.165 & 0.065 & 0.066 & 0.059 & 0.118 & 0.063 & 0.057 & 0.052 & 0.079 \\
 & 0.75 & 0.857 & 0.849 & 0.838 & 0.948 & 0.965 & 0.959 & 0.951 & 0.994 & 0.997 & 0.997 & 0.996 & 1.000 \\
\bottomrule
\end{tabular}
\end{adjustbox}
\vspace{2pt}
\begin{minipage}{\textwidth}
\footnotesize
\emph{Notes:} The relevance threshold is $\Delta=0.5$; hence the bold entries in the $a$ column identify the boundary of the null hypothesis. $\mathrm{SN}_{c}$ denotes the bias-corrected selfnormalized test with $\beta_n=c n^{1/3}$, and \textsc{DKA} denotes the relevant-change bootstrap of Dette, Kokot and Aue (2020).  Each entry reports the empirical rejection rate  based on 1,000 Monte Carlo replications.
\end{minipage}
\end{table}

\begin{table}[H]
\centering
\caption{Empirical rejection probabilities for the plateau design.}
\label{tab:rejection-plateau}
\begin{adjustbox}{max width=\textwidth}
\begin{tabular}{cc cccc|cccc|cccc|cccc}
\toprule
$\rho$ & $a$ & \multicolumn{4}{c}{${n=100}$} & \multicolumn{4}{c}{${n=200}$} & \multicolumn{4}{c}{${n=400}$} \\
\cmidrule(lr){3-6} \cmidrule(lr){7-10} \cmidrule(lr){11-14}
 &  & $\mathrm{SN}_{1}$ & $\mathrm{SN}_{2}$ & $\mathrm{SN}_{3}$ & \textsc{DKA} & $\mathrm{SN}_{1}$ & $\mathrm{SN}_{2}$ & $\mathrm{SN}_{3}$ & \textsc{DKA} & $\mathrm{SN}_{1}$ & $\mathrm{SN}_{2}$ & $\mathrm{SN}_{3}$ & \textsc{DKA} \\
\midrule
\multirow{3}{*}{0.00} & 0.25 & 0.000 & 0.000 & 0.000 & 0.000 & 0.000 & 0.000 & 0.000 & 0.000 & 0.000 & 0.000 & 0.000 & 0.000 \\
 & \textbf{0.50} & 0.106 & 0.081 & 0.055 & 0.164 & 0.066 & 0.056 & 0.043 & 0.139 & 0.045 & 0.029 & 0.019 & 0.095 \\
 & 0.75 & 0.981 & 0.960 & 0.935 & 1.000 & 0.999 & 0.996 & 0.989 & 1.000 & 1.000 & 1.000 & 1.000 & 1.000 \\
\midrule
\multirow{3}{*}{0.25} & 0.25 & 0.001 & 0.000 & 0.000 & 0.002 & 0.000 & 0.000 & 0.000 & 0.000 & 0.000 & 0.000 & 0.000 & 0.000 \\
 & \textbf{0.50} & 0.115 & 0.109 & 0.095 & 0.209 & 0.096 & 0.089 & 0.074 & 0.169 & 0.088 & 0.073 & 0.061 & 0.143 \\
 & 0.75 & 0.992 & 0.978 & 0.974 & 1.000 & 1.000 & 0.998 & 0.993 & 1.000 & 1.000 & 1.000 & 1.000 & 1.000 \\
\midrule
\multirow{3}{*}{0.50} & 0.25 & 0.000 & 0.000 & 0.000 & 0.000 & 0.000 & 0.000 & 0.000 & 0.000 & 0.000 & 0.000 & 0.000 & 0.000 \\
 & \textbf{0.50} & 0.138 & 0.141 & 0.139 & 0.237 & 0.103 & 0.104 & 0.102 & 0.174 & 0.075 & 0.071 & 0.066 & 0.122 \\
 & 0.75 & 0.987 & 0.970 & 0.968 & 1.000 & 0.999 & 0.996 & 0.993 & 1.000 & 1.000 & 1.000 & 0.999 & 1.000 \\
\midrule
\multirow{3}{*}{0.75} & 0.25 & 0.000 & 0.000 & 0.000 & 0.001 & 0.000 & 0.000 & 0.000 & 0.000 & 0.000 & 0.000 & 0.000 & 0.000 \\
 & \textbf{0.50} & 0.148 & 0.141 & 0.139 & 0.242 & 0.125 & 0.119 & 0.113 & 0.184 & 0.077 & 0.089 & 0.083 & 0.145 \\
 & 0.75 & 0.993 & 0.985 & 0.983 & 1.000 & 0.999 & 0.999 & 0.997 & 1.000 & 1.000 & 1.000 & 1.000 & 1.000 \\
\bottomrule
\end{tabular}
\end{adjustbox}
\vspace{2pt}
\begin{minipage}{\textwidth}
\footnotesize
\emph{Notes:} The relevance threshold is $\Delta=0.5$; hence the bold entries in the $a$ column identify the boundary of the null hypothesis. $\mathrm{SN}_{c}$ denotes the bias-corrected selfnormalized test with $\beta_n=c n^{1/3}$, and \textsc{DKA} denotes the relevant-change bootstrap of Dette, Kokot and Aue (2020).  Each entry reports the empirical rejection rate  based on 1,000 Monte Carlo replications.
\end{minipage}
\end{table}

In the interior of the null (i.e. $a=0.25$) all procedures reflect the asymptotic theory: rejection rates are essentially zero for every sample size, dependence level, and extremal geometry. This indicates that the relevant tests clearly distinguish changes that are well below the relevance threshold. At the boundary of the null (i.e. $a=0.5$) we observe substantially better size control for the selfnormalized procedures across all sample sizes, geometries and dependence levels. In fact the bootstrap-based procedure exceeds the nominal level in almost all of the considered scenarios. The plateau scenario $d_2$ seems to be substantially more difficult than the isolated scenario $d_1$ - even the selfnormalized methods need $n=400$ for somewhat accurate size control in the presence of strong dependence. All three choices of $c$ deliver comparable performances, with $c=3$ being slightly undersized for larger sample sizes and independent data. The rejection rates seem to be monotonically decreasing for larger $c$. Nonetheless it is noteworthy that the performance of the selfnormalized test is fairly robust to a wide range of possible $c$. For the alternative (i.e. $a=0.75$) we observe that the selfnormalized tests generally have less power than the bootstrap test. This is consistent with the findings in many previous studies on the power of selfnormalized tests. Nonetheless their performance is at least competitive in virtually all scenarios when taking into consideration the severe size inflation of the bootstrap test.

\section*{Acknowledgements}
The author was supported by the Aarhus University Research Foundation (AUFF), project numbers 47221 and 47388.

\subsection*{Disclosure of Generative AI Use}
The author used \textit{ChatGPT 5.6 Sol} during the preparation of this manuscript. The tool was used for 
\textit{drafting, language editing, generating R code and formatting \LaTeX}.

All statistical methods, analyses, interpretations, and 
conclusions were developed and verified by the author. Any AI-generated 
suggestions, text, or code were critically reviewed and, where necessary, 
modified by the author. The author takes full responsibility for the accuracy, integrity, and reproducibility of the work. No sensitive data were provided to the AI tool.

\section{Appendix}

\subsection{Analysis of the bias-corrected test under local alternatives}
\label{sec:local-alternative}
We first establish a general theorem about local alternatives and then derive a corollary about radial local alternatives to make transparent that our test enjoys non-trivial power against any fixed direction of deviation from $H_0(\Delta)$.\\

For the bias-corrected functional, write
\begin{align}
\label{eq:derivative-bias-corrected}
   \ell_n^{\mathcal L}
:=
D\mathcal L_{\beta_n,d_n}
=
\sum_{j=1}^3
a_jD\Psi_{c_j\beta_n,d_n}, 
\end{align}
and define the corresponding projected long-run variance by
\[
(\sigma_n^{\mathcal L})^2
:=
\E\left[
\left\{
\ell_n^{\mathcal L}(B(1))
\right\}^2
\right].
\]

\begin{theo}[Local power of the bias-corrected test]
\label{theo:bias-corrected-local}
Let \(d=d_n\in C([0,1])\) be a sequence of contrasts and put
\[
M_n:=\norminf{d_n}.
\]
Suppose that Assumptions \ref{ass:population-centering} hold with $d=d_n$. Further assume that
\[
\sup_n\|\ell_n^{\mathcal L}\|_{\Cspace^*}<\infty,
\qquad
\liminf_{n\to\infty}(\sigma_n^{\mathcal L})^2>0,\quad 0<\lim\inf_nM_n\leq \lim \sup_n M_n<\infty.
\]
Also assume that one of the following conditions holds uniformly in
\(n\):
\begin{itemize}
    \item[(1)] the isolated-extrema assumptions of
    Theorem~\ref{theo:laplace-bias} hold uniformly (i.e. the number of extrema is uniformly bounded, and there exists $\delta>0$ such that all extrema are at least $\delta$ apart from one another and at least $\delta$ away from the boundary. Moreover, the curvatures are uniformly bounded away from zero and $\sup_n \|d_n\|_{C^4}<\infty$) and
    \[
    \sqrt n\,\beta_n^{-2}=o(1);
    \]
    \item[(2)] the positive-measure-extremal-set assumptions of
    Theorem~\ref{theo:plateau-bias} hold uniformly (i.e. $m$ stays bounded away from 0, $K_n$ is uniformly bounded away from 0 and infinity and the remainder in the expansion of $F(\cdot)$ is uniform), with a common exponent
    \(\alpha>0\), and
    \[
    \sqrt n\,\beta_n^{-(1+\alpha)}=o(1).
    \]
\end{itemize}
Suppose moreover that
\[
\gamma_n
:=
\frac{\sqrt{n\tau_0(1-\tau_0)}}{\sigma_n^{\mathcal L}}
\left\{
M_n-\Delta
\right\}
\longrightarrow
\gamma\in\overline{\R}.
\]
Then
\[
\Pr\left(
T_{n,2}^{SN}(\Delta)
>
q_{1-\alpha_0}(\widehat\tau_n)
\right)
\longrightarrow
\Pr\left(
\frac{Z+\gamma}{H_{\tau_0}}
>
q_{1-\alpha_0}(\tau_0)
\right),
\]
where
\[
H_{\tau}
=
\left\{
\tau^2\int_0^1\bigl(B_1^0(u)\bigr)^2\,du
+
(1-\tau)^2\int_0^1\bigl(B_2^0(u)\bigr)^2\,du
\right\}^{1/2},
\]
and \(Z,B_1^0,B_2^0\) are mutually independent, with
\(Z\sim N(0,1)\) and \(B_1^0,B_2^0\) standard Brownian bridges.

For \(\gamma\in\R\), the limiting rejection probability is
\[
\E\left[
1-\Phi\left\{
q_{1-\alpha_0}(\tau_0)H_{\tau_0}-\gamma
\right\}
\right].
\]
If \(\gamma_n\to-\infty\), the rejection probability converges to zero,
whereas if \(\gamma_n\to+\infty\), it converges to one.
\end{theo}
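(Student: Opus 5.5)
The proof decomposes the numerator of $T_{n,2}^{SN}(\Delta)$ as
\[
\sqrt n\{\mathcal L_{\beta_n}(\widehat d_n)-\Delta\}
=\sqrt n\,\ell_n^{\mathcal L}(\widehat d_n-d_n)+\sqrt n\,\widetilde R_{n}
+\sqrt n\{\mathcal L_{\beta_n}(d_n)-M_n\}+\sqrt n(M_n-\Delta),
\]
and treats the four terms separately. The selfnormalizer is then handled exactly as in the proof of Theorem~\ref{theo:population-centered-sn}, with $D\Psi_{\beta_n,\cdot}$ replaced by $D\mathcal L_{\beta_n,\cdot}=\sum_j a_jD\Psi_{c_j\beta_n,\cdot}$.

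\emph{Remainder.} Lemma~\ref{lem:softmax-differentiability} gives $|\widetilde R_n|\le \tfrac{\beta_n}{2}\sum_j|a_j|c_j\norminf{\widehat d_n-d_n}^2$. I need $\norminf{\widehat d_n-d_n}=O_p(n^{-1/2})$, which follows from (A1), (A2) and the boundedness of $M_n$: the misclassified observations number $o_p(\sqrt n)$ and shift the contrast by at most $O(M_n)\,o_p(\sqrt n)/n$. Combined with (A3), this gives $\sqrt n\,\widetilde R_n=O_p(\beta_n n^{-1/2})=o_p(1)$.

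\emph{Bias.} This is the main obstacle, because Theorems~\ref{theo:laplace-bias} and \ref{theo:plateau-bias} are stated for a fixed $d$, while here $d=d_n$ varies. I would rerun the Laplace argument, respectively the $F$-integral argument, and track the constants, so that the $O(\beta^{-2})$ (resp.\ $o(\beta^{-(1+\alpha)})$) remainders hold uniformly in $n$ under the stated uniformity conditions. In case (1) these conditions are: a bounded number of extrema, $\delta$-separation, curvatures bounded below, and $\sup_n\|d_n\|_{C^4}<\infty$. Outside the $\delta$-neighbourhoods one also needs $\sup|d_n|\le M_n-\eta$ for some uniform $\eta>0$; I would either read this into ``uniformly'' or derive it from compactness of the $C^4$-ball. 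Within the neighbourhoods, the Taylor remainders are controlled by the uniform $C^4$ bound. In case (2) the uniform remainder in $F$, together with $m$ and $K_n$ being bounded away from zero, gives a uniform error after integrating $\int e^{-\beta u}\,dF(u)$. The constraints on $(\mathbf a,\mathbf c)$ then cancel the terms $\beta^{-1}\log\beta$ and $\beta^{-1}\times\text{const}$ (even though $S_0(d_n)$ or $m_n$ depends on $n$, since it enters linearly in $1/\beta$). What remains is $O(\beta_n^{-2})$ or $o(\beta_n^{-(1+\alpha)})$, and multiplied by $\sqrt n$ this is $o(1)$.

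\emph{Linear term and selfnormalizer.} Write $\widehat d_n-d_n$ via partial sums evaluated at $k_0$, plus an $o_p(n^{-1/2})$ error from (A2). The functional $\ell_n^{\mathcal L}$ changes with $n$ but has bounded dual norm, so by (A1) and tightness, $\sqrt n\,\ell_n^{\mathcal L}(\widehat d_n-d_n)$ is, up to $o_p(1)$, equal to $\ell_n^{\mathcal L}$ applied to the increments $\mathbb B(\tau_0)/\tau_0-(\mathbb B(1)-\mathbb B(\tau_0))/(1-\tau_0)$. Along subsequences where $\ell_n^{\mathcal L}$ converges weak-$*$ (or using the Gaussian structure directly with variance $(\sigma_n^{\mathcal L})^2/(\tau_0(1-\tau_0))$), this yields $\sigma_n^{\mathcal L}Z/\sqrt{\tau_0(1-\tau_0)}$. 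Jointly, the bridge sums give $\widehat{\mathcal V}_n^{(\beta_n)}=(\sigma_n^{\mathcal L})^2H_{\tau_0}^2/(\tau_0(1-\tau_0))\,(1+o_p(1))$ by the continuous mapping theorem. Here one also needs $D\mathcal L_{\beta_n,\widehat d_n}-D\mathcal L_{\beta_n,d_n}=O_p(\beta_n n^{-1/2})=o_p(1)$ in dual norm, again from Lemma~\ref{lem:softmax-differentiability}. The Brownian-bridge pieces on $[0,\tau_0]$ and $[\tau_0,1]$ are independent of $Z$ by the independence of Brownian bridge and endpoint.

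\emph{Conclusion.} $T_{n,2}^{SN}(\Delta)=(Z_n+\gamma_n)/H_{n}+o_p(1)$ with $(Z_n,H_n)\cond(Z,H_{\tau_0})$. Together with $\widehat\tau_n\to\tau_0$ and the continuity of $q_{1-\alpha_0}(\cdot)$ noted earlier, this gives the stated limit, including $\gamma=\pm\infty$ because $H_{\tau_0}>0$ almost surely. Conditioning on $H_{\tau_0}$ yields the formula $\E[1-\Phi\{q_{1-\alpha_0}(\tau_0)H_{\tau_0}-\gamma\}]$.
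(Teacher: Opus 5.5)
Your proposal is correct and follows the paper's route: the oracle reduction and the projected selfnormalized invariance principle applied to $\ell_n^{\mathcal L}$, the uniform bias statement $\mathcal L_{\beta_n}(d_n)=M_n+o(n^{-1/2})$ obtained by rerunning the Laplace and plateau arguments along $d_n$, and then Slutsky for the $\gamma_n$-term together with continuity of $q_{1-\alpha_0}(\cdot)$. You are right that case (1) also needs a uniform gap $\sup_{t\notin\bigcup_j U_j}|d_n(t)|\le M_n-\eta$, which the paper tacitly reads into ``uniformly'' (the $C^4$-compactness fallback cannot supply it, since a secondary bump of height $M_n-\beta_n^{-1}$ satisfies all listed conditions yet leaves an uncancelled bias of order $\beta_n^{-1}$); also, in case (2) the leftover bias is $O(\beta_n^{-(1+\alpha)})$ rather than $o(\beta_n^{-(1+\alpha)})$, which is harmless under the rate condition.
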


\begin{proof}
The proof only requires minor modifications of the proof of
Theorem~\ref{theo:bias-corrected-sn}. By the oracle reduction for the
bias-corrected functional,
\[
\frac{
\sqrt n\{
\mathcal L_{\beta_n}(\widehat d_n)
-
\mathcal L_{\beta_n}(d_n)
\}
}{
\sqrt{\widehat{\mathcal V}_n^{(\beta_n)}}
}
=
\frac{
\ell_n^{\mathcal L}\{
\sqrt n(\widetilde d_n-d_n)
\}
}{
\sqrt{\mathcal V_n^{\mathcal L}}
}
+o_p(1),
\]
where \(\widetilde d_n\) and \(\mathcal V_n^{\mathcal L}\) denote the
corresponding oracle contrast and oracle selfnormalizer. The proof of
the projected selfnormalized invariance principle applies to the
sequence \(\ell_n^{\mathcal L}\), since its operator norms are uniformly
bounded and its projected variances are uniformly nondegenerate.
Consequently,
\[
\left(
\frac{
\sqrt{n\tau_0(1-\tau_0)}
}{
\sigma_n^{\mathcal L}
}
\ell_n^{\mathcal L}(\widetilde d_n-d_n),
\,
\frac{
\sqrt{\tau_0(1-\tau_0)}
}{
\sigma_n^{\mathcal L}
}
\sqrt{\mathcal V_n^{\mathcal L}}
\right)
\cond
(Z,H_{\tau_0}).
\]

Next we note that $\mathcal L_{\beta_n}(d_n)=M_n+o(n^{-1/2})$. This follows by the arguments given in the proofs of Theorem \ref{theo:laplace-bias} and \ref{theo:plateau-bias} which are valid in the present triangular array setting due our assumptions. Now, using this, we decompose
\[
T_{n,2}^{SN}(\Delta)
=
\frac{
\sqrt n\{
\mathcal L_{\beta_n}(\widehat d_n)
-
\mathcal L_{\beta_n}(d_n)
\}
}{
\sqrt{\widehat{\mathcal V}_n^{(\beta_n)}}
}
+
\frac{
\sqrt n\{
M_n-\Delta
\}
}{
\sqrt{\widehat{\mathcal V}_n^{(\beta_n)}}
}
+
o_{\mathbb P}(1).
\]
After multiplying numerator and denominator of the second term by
\(\sqrt{\tau_0(1-\tau_0)}/\sigma_n^{\mathcal L}\), its numerator is
\(\gamma_n\), while its denominator converges jointly to
\(H_{\tau_0}\). Slutsky's theorem therefore gives
\[
T_{n,2}^{SN}(\Delta)
\cond
\frac{Z+\gamma}{H_{\tau_0}}.
\]
Since \(\widehat\tau_n\to_p\tau_0\) and
\(q_{1-\alpha_0}(\cdot)\) is continuous, the claimed convergence of
the rejection probability follows. The cases
\(\gamma=\pm\infty\) follow from the same decomposition and the fact
that \(H_{\tau_0}>0\) almost surely.
\end{proof}

\begin{cor}[Radial local alternatives]
\label{cor:bias-corrected-radial-local}
Let \(d_0\in C([0,1])\) satisfy
\[
\norminf{d_0}=\Delta>0,
\]
and suppose that \(d_0\) satisfies either the assumptions of
Theorem~\ref{theo:laplace-bias} or those of
Theorem~\ref{theo:plateau-bias}. For fixed \(h\in\R\), define
\[
d_n
=
\left(
1+\frac{h}{\Delta\sqrt n}
\right)d_0.
\]
Assume the corresponding rate condition from
Theorem~\ref{theo:bias-corrected-local} and suppose that
\[
\sigma_n^{\mathcal L}\longrightarrow\sigma_{\mathcal L}>0.
\]
Then
\[
M_n=\norminf{d_n}
=
\Delta+\frac{h}{\sqrt n}
\]
for all sufficiently large \(n\), and
\[
\Pr\left(
T_{n,2}^{SN}(\Delta)
>
q_{1-\alpha_0}(\widehat\tau_n)
\right)
\longrightarrow
\Pr\left(
\frac{Z+\gamma_h}{H_{\tau_0}}
>
q_{1-\alpha_0}(\tau_0)
\right),
\]
where
\[
\gamma_h
=
\frac{\sqrt{\tau_0(1-\tau_0)}}{\sigma_{\mathcal L}}h.
\]
Equivalently, the limiting local power is
\[
\E\left[
1-\Phi\left\{
q_{1-\alpha_0}(\tau_0)H_{\tau_0}
-
\frac{\sqrt{\tau_0(1-\tau_0)}}{\sigma_{\mathcal L}}h
\right\}
\right].
\]
In particular, this equals \(\alpha_0\) when \(h=0\), is smaller than
\(\alpha_0\) when \(h<0\), and is larger than \(\alpha_0\) when \(h>0\).
\end{cor}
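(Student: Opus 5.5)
The plan is to reduce the corollary to Theorem~\ref{theo:bias-corrected-local} by checking its hypotheses for the radial sequence $d_n=(1+h/(\Delta\sqrt n))d_0$ and then computing $\gamma_n$.

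\textbf{Step 1: the norm and the extremal set.} For $n$ large we have $c_n:=1+h/(\Delta\sqrt n)>0$. Hence $|d_n|=c_n|d_0|$ and $M_n=c_n\Delta=\Delta+h/\sqrt n$. In particular $M_n\to\Delta$, which gives $0<\liminf M_n\le\limsup M_n<\infty$. Scaling by the positive constant $c_n$ leaves the extremal set unchanged, $\mathcal E(d_n)=\mathcal E(d_0)$, and it leaves the signs $s_j$ and the locations $t_j$ unchanged as well.

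\textbf{Step 2: uniform geometry.} In the isolated case the derivatives scale as $d_n^{(k)}=c_nd_0^{(k)}$. So $\lambda_{j,n}=c_n\lambda_j\to\lambda_j>0$, $\sup_n\|d_n\|_{C^4}\le 2\|d_0\|_{C^4}$, and the number of extrema, their separation and their distance to the boundary are fixed. This is exactly condition (1) of Theorem~\ref{theo:bias-corrected-local}.

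In the plateau case we have $g_n=M_n-|d_n|=c_ng_0$, hence $F_n(u)=F_0(u/c_n)$. This gives $F_n(u)=m+Kc_n^{-\alpha}u^\alpha+o(u^\alpha)$, where the remainder is uniform because $c_n\in[1/2,2]$. Indeed, $o((u/c_n)^\alpha)$ with $c_n$ in a compact set is $o(u^\alpha)$ uniformly. Also $K_n=Kc_n^{-\alpha}$ is bounded away from $0$ and $\infty$, so condition (2) holds.

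\textbf{Step 3: Assumptions~\ref{ass:population-centering} and the functional-analytic conditions.} Assumptions (A1)--(A3) do not depend on $d$, apart from (A2) being required along the sequence, which we take as part of the hypotheses. The remaining conditions are $\sup_n\|\ell_n^{\mathcal L}\|_{\Cspace^*}<\infty$ and $\liminf(\sigma_n^{\mathcal L})^2>0$.
\begin{itemize}
\item For the operator norm, $|D\Psi_{\beta,f}(h)|\le\|h\|_\infty$, since the numerator weight is dominated by the denominator weight. This gives $\|\ell_n^{\mathcal L}\|\le\sum_j|a_j|$.
\item Nondegeneracy follows from the assumed $\sigma_n^{\mathcal L}\to\sigma_{\mathcal L}>0$. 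The same limit also yields (A4) in the form needed.
\end{itemize}

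\textbf{Step 4: conclusion.} Now
\[
\gamma_n=\frac{\sqrt{n\tau_0(1-\tau_0)}}{\sigma_n^{\mathcal L}}\cdot\frac{h}{\sqrt n}\to\frac{\sqrt{\tau_0(1-\tau_0)}}{\sigma_{\mathcal L}}h=\gamma_h.
\]
Theorem~\ref{theo:bias-corrected-local} then yields the stated limit and the expectation formula.

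For the final monotonicity claim, $\gamma\mapsto\E[1-\Phi(qH_{\tau_0}-\gamma)]$ is strictly increasing, because $\Phi$ is strictly increasing and $H_{\tau_0}<\infty$ a.s. At $h=0$ the value is $\Pr(Z/H_{\tau_0}>q_{1-\alpha_0}(\tau_0))=\alpha_0$ by the definition of the quantile, using continuity of the law of $\mathcal T_{\tau_0}$.

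\textbf{Main obstacle.} I expect Step 2 in the plateau case to be the main difficulty, specifically making sure the uniformity of the $o(u^\alpha)$ remainder under rescaling is exactly what the triangular-array version of Theorem~\ref{theo:plateau-bias} requires. I would handle it by writing the remainder as $u^\alpha r_0(u/c_n)c_n^{-\alpha}$ with $\sup_{v\le 2u}|r_0(v)|\to0$.
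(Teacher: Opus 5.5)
Your proposal is correct and follows essentially the same route as the paper: scaling by $c_n=1+h/(\Delta\sqrt n)>0$ leaves the extremal set and its geometry intact, so the uniform hypotheses of Theorem~\ref{theo:bias-corrected-local} hold, and $\gamma_n\to\gamma_h$ because $M_n-\Delta=h/\sqrt n$ exactly; you simply make explicit the uniformity checks ($\lambda_{j,n}=c_n\lambda_j$, $F_n(u)=F_0(u/c_n)$) and the monotonicity in $h$ that the paper leaves implicit. The one remark that needs a line of justification is that $\sigma_{\mathcal L}>0$ also yields (A4) for $D\Psi_{\beta_n,d_n}$. This holds because $D\Psi_{\beta_n,d_n}=D\Psi_{c_n\beta_n,d_0}$ has the same pointwise limit as $\ell_n^{\mathcal L}$, so dominated convergence gives $\sigma_n^2\to\sigma_{\mathcal L}^2$.
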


\begin{proof}
The scaling defining \(d_n\) preserves the extremal set and its
qualitative geometry. The bias expansions used in
Theorem~\ref{theo:bias-corrected-sn} therefore hold uniformly along
this sequence, with coefficients that remain bounded and converge to
their values at \(d_0\). Hence
\[
\mathcal L_{\beta_n}(d_n)
=
M_n+o(n^{-1/2})
=
\Delta+\frac{h}{\sqrt n}+o(n^{-1/2}).
\]
It follows that
\[
\gamma_n
=
\frac{\sqrt{n\tau_0(1-\tau_0)}}{\sigma_n^{\mathcal L}}
\left\{
M_n-\Delta
\right\}
\longrightarrow
\frac{\sqrt{\tau_0(1-\tau_0)}}{\sigma_{\mathcal L}}h.
\]
The result now follows directly from
Theorem~\ref{theo:bias-corrected-local}.
\end{proof}

\subsection{Proof of Theorem \ref{theo:population-centered-sn}}

For notational clarity, write
\[
D\Psi_{\beta,f}(h):=D\Psi_\beta(f)[h],
\qquad f,h\in\Cspace,
\]
and define the population-centered statistic
\[
T_{n,\Psi}^{\mathrm{SN}}
:=
T_n^{\mathrm{SN}}\bigl(\Psi_{\beta_n}(d)\bigr)
=
\frac{
\sqrt n\{\Psi_{\beta_n}(\widehat d_n)-\Psi_{\beta_n}(d)\}
}{
\sqrt{\widehat V_n^{(\beta_n)}}
}.
\]

\begin{lemma}[Differentiability and stability of the soft maximum]
\label{lem:softmax-differentiability}
Let \(\beta\neq0\). The functional
\[
\Psi_\beta(f)
=
\frac1\beta
\log\int_0^1
\left\{e^{\beta f(t)}+e^{-\beta f(t)}\right\}\,dt,
\qquad f\in\Cspace,
\]
is Fr\'echet differentiable on \(\Cspace\). More precisely, for
\(f,h\in\Cspace\),
\[
\Psi_\beta(f+h)
=
\Psi_\beta(f)+D\Psi_{\beta,f}(h)+\mathcal R_{\beta,f}(h),
\]
where
\[
D\Psi_{\beta,f}(h)
=
\frac{
\int_0^1 h(t)\{e^{\beta f(t)}-e^{-\beta f(t)}\}\,dt
}{
\int_0^1 \{e^{\beta f(t)}+e^{-\beta f(t)}\}\,dt
}.
\]
The derivative and the remainder satisfy
\[
|D\Psi_{\beta,f}(h)|\le \norminf{h},
\qquad
|\mathcal R_{\beta,f}(h)|
\le \frac{|\beta|}{2}\norminf{h}^2.
\]
In addition, the derivative is Lipschitz continuous in its base point:
for all \(f,g\in\Cspace\),
\[
\bigl\|D\Psi_{\beta,f}-D\Psi_{\beta,g}\bigr\|_{\Cspace^*}
\le |\beta|\norminf{f-g}.
\]
\end{lemma}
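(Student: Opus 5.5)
The plan is to view \(\Psi_\beta\) as a composition and study the scalar function \(\phi(\epsilon)=\Psi_\beta(f+\epsilon h)\), \(\epsilon\in[0,1]\). Write
\[
Z(f)=\int_0^1\{e^{\beta f(t)}+e^{-\beta f(t)}\}\,dt,\qquad
p_f(t)=\frac{e^{\beta f(t)}-e^{-\beta f(t)}}{Z(f)},\qquad
q_f(t)=\frac{e^{\beta f(t)}+e^{-\beta f(t)}}{Z(f)}.
\]
Note that \(q_f\ge 0\), \(\int_0^1 q_f=1\) and \(|p_f|\le q_f\) pointwise, since \(|\sinh|\le\cosh\). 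Because \(f,h\) are bounded and continuous, dominated convergence allows differentiation under the integral. This gives
\[
\phi'(\epsilon)=\int_0^1 h\,p_{f+\epsilon h}\,dt .
\]
At \(\epsilon=0\) this is exactly \(D\Psi_{\beta,f}(h)\). It immediately yields \(|D\Psi_{\beta,f}(h)|\le\norminf{h}\int q_f=\norminf{h}\).

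For the second derivative we differentiate again. Since \(\partial_\epsilon(e^{\beta g}-e^{-\beta g})=\beta h(e^{\beta g}+e^{-\beta g})\) and \(\partial_\epsilon Z=\beta\int h\,(e^{\beta g}-e^{-\beta g})\), with \(g=f+\epsilon h\), we get
\[
\phi''(\epsilon)=\beta\left[\int_0^1 h^2 q_g\,dt-\Bigl(\int_0^1 h\,p_g\,dt\Bigr)^2\right].
\]
This is \(\beta\) times a quantity lying in \([0,\norminf{h}^2]\): it is nonnegative by Cauchy--Schwarz with \(|p_g|\le q_g\), and it is at most \(\int h^2 q_g\le\norminf{h}^2\). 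Taylor's theorem with Lagrange remainder then gives \(\mathcal R_{\beta,f}(h)=\phi''(\xi)/2\) for some \(\xi\in(0,1)\). Hence \(|\mathcal R_{\beta,f}(h)|\le\frac{|\beta|}{2}\norminf{h}^2\); for \(\beta<0\) the same bound holds with \(|\beta|\), or one can simply note that \(\Psi_{-\beta}=-\Psi_\beta\). Fr\'echet differentiability follows, since the remainder is \(O(\norminf{h}^2)\) uniformly and \(D\Psi_{\beta,f}\) is a bounded linear functional.

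For the Lipschitz bound, fix \(h\) with \(\norminf{h}\le1\) and consider \(\psi(\epsilon)=D\Psi_{\beta,f+\epsilon(g-f)}(h)\), with \(k=g-f\). The same computation as above, now applied to the mixed second derivative, gives
\[
\psi'(\epsilon)=\beta\left[\int_0^1 hk\,q\,dt-\int_0^1 h\,p\,dt\int_0^1 k\,p\,dt\right],
\]
where \(p,q\) are evaluated at \(f+\epsilon k\).

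The main point is to bound this covariance-like expression by \(\norminf{h}\norminf{k}\), not by twice that. I would argue as follows. On the two-point extension of the measure (mass \(e^{\pm\beta g}/Z\) at sign \(\pm\)), the bracket is the covariance of the random variables \(\sigma h(t)\) and \(\sigma k(t)\), where \(\sigma=\pm1\) is the sign. Then Cauchy--Schwarz bounds it by \(\sqrt{\mathrm{Var}(\sigma h)\mathrm{Var}(\sigma k)}\le\norminf{h}\norminf{k}\), since each variance is at most the second moment. This gives \(|\psi(1)-\psi(0)|\le|\beta|\norminf{k}\). Taking the supremum over \(h\) yields the claimed operator-norm bound.
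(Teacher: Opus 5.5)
Your proof is correct and follows essentially the same route as the paper: restrict to the line $\epsilon\mapsto f+\epsilon h$, and write the first derivative as a mean and the second (and mixed) derivative as $\beta$ times a variance/covariance under the probability measure on $[0,1]\times\{-1,1\}$ with density $e^{\pm\beta g}/Z$. You then bound these by Cauchy--Schwarz and integrate the mixed derivative along $f+\epsilon(g-f)$ to get the Lipschitz bound, exactly as the paper does; the only cosmetic difference is that you use the Lagrange form of the Taylor remainder where the paper uses the integral form.
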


\begin{proof}
Define
\[
G(f)
=
\int_0^1\left\{e^{\beta f(t)}+e^{-\beta f(t)}\right\}\,dt,
\qquad
\Psi_\beta(f)=\frac1\beta\log G(f).
\]
Fix \(f,h\in\Cspace\), and put
\[
F(\theta)=\Psi_\beta(f+\theta h),
\qquad \theta \in[0,1].
\]
Taylor's formula with integral remainder gives
\[
F(1)=F(0)+F'(0)+\int_0^1(1-\theta)F''(\theta)\,d\theta.
\]
For \(g_\theta=f+\theta h\), let
\[
Z_\theta
=
\int_0^1\left\{e^{\beta g_\theta(t)}+e^{-\beta g_\theta(t)}\right\}\,dt.
\]
Differentiation yields
\[
F'(\theta)
=
\frac{
\int_0^1 h(t)\{e^{\beta g_\theta(t)}-e^{-\beta g_\theta(t)}\}\,dt
}{Z_\theta}.
\]
In particular, \(F'(0)=D\Psi_{\beta,f}(h)\), which proves the
formula for the derivative.

It is useful to introduce the probability measure \(\nu_\theta\) on
\([0,1]\times\{-1,1\}\) given by
\[
d\nu_\theta(t,a)
=
\frac{e^{\beta a g_\theta(t)}}{Z_\theta}\,dt,
\qquad a\in\{-1,1\},
\]
where counting measure is used on \(\{-1,1\}\). Then
\[
F'(\theta)=\int a h(t)\,d\nu_\theta(t,a)
\]
and a second differentiation gives
\[
F''(\theta)
=
\beta\left[
\int h(t)^2\,d\nu_\theta(t,a)
-
\left\{\int a h(t)\,d\nu_\theta(t,a)\right\}^2
\right].
\]
Consequently,
\[
|F''(\theta)|
\le |\beta|\int h(t)^2\,d\nu_\theta(t,a)
\le |\beta|\norminf{h}^2.
\]
Therefore
\[
|\mathcal R_{\beta,f}(h)|
\le
\int_0^1(1-\theta)|F''(\theta)|\,d\theta 
\le
\frac{|\beta|}{2}\norminf{h}^2.
\]
This proves the claimed second-order Taylor estimate and, in particular,
the Fr\'echet differentiability of \(\Psi_\beta\). Notice also that
\[
|D\Psi_{\beta,f}(h)|
=
\left|\int a h(t)\,d\nu_0(t,a)\right|
\le \norminf{h}.
\]

It remains to prove the stability bound. For \(f,h,k\in\Cspace\),
the second derivative is the bilinear form
\[
D^2\Psi_\beta(f)[h,k]
=
\beta\,\operatorname{cov}_{\nu_f}
\bigl(a h(t),a k(t)\bigr),
\]
where \(\nu_f\) denotes the preceding probability measure with
\(g_\theta\) replaced by \(f\). By the Cauchy--Schwarz inequality,
\[
|D^2\Psi_\beta(f)[h,k]|
\le |\beta|\norminf{h}\norminf{k}.
\]
Applying the fundamental theorem of calculus to
\(\theta \mapsto D\Psi_{\beta,g+\theta(f-g)}(h)\) gives
\begin{align*}
|D\Psi_{\beta,f}(h)-D\Psi_{\beta,g}(h)|
&\le
\int_0^1
\left|
D^2\Psi_\beta\bigl(g+\theta(f-g)\bigr)[f-g,h]
\right|\,d\theta\\
&\le
|\beta|\norminf{f-g}\norminf{h}.
\end{align*}
Taking the supremum over \(\norminf{h}\le1\) proves
\[
\bigl\|D\Psi_{\beta,f}-D\Psi_{\beta,g}\bigr\|_{\Cspace^*}
\le |\beta|\norminf{f-g}.
\]
\end{proof}

For the next lemma, define the oracle contrast
\[
\widetilde d_n
=
\frac1{k_0}\sum_{i=1}^{k_0}X_i
-
\frac1{n-k_0}\sum_{i=k_0+1}^{n}X_i,
\]
and the error bridges
\[
\widetilde B_q^L
=
\sum_{i=1}^q\epsilon_i
-
\frac{q}{k_0}\sum_{i=1}^{k_0}\epsilon_i,
\qquad 1\le q\le k_0,
\]
\[
\widetilde B_q^R
=
\sum_{i=q}^n\epsilon_i
-
\frac{n-q+1}{n-k_0}\sum_{i=k_0+1}^{n}\epsilon_i,
\qquad k_0+1\le q\le n,
\]
and the oracle projected normalizer
\[
\widetilde V_n
=
\frac1{k_0(n-k_0)}
\left[
\sum_{q=1}^{k_0}\{\ell_n(\widetilde B_q^L)\}^2
+
\sum_{q=k_0+1}^{n}\{\ell_n(\widetilde B_q^R)\}^2
\right].
\]

\begin{lemma}[Linearization and oracle reduction]
\label{lem:oracle-reduction}
Under Assumptions~\ref{ass:population-centering},
\[
\sqrt n\left[
\Psi_{\beta_n}(\widehat d_n)-\Psi_{\beta_n}(d)
-\ell_n(\widehat d_n-d)
\right]
=o_p(1),
\]
\[
\sqrt n\,\norminf{\widehat d_n-\widetilde d_n}=o_p(1)
\qquad\text{and}\qquad
\widehat V_n^{(\beta_n)}-\widetilde V_n=o_p(1).
\]
Hence
\[
T_{n,\Psi}^{\mathrm{SN}}
=
\frac{\ell_n\{\sqrt n(\widetilde d_n-d)\}}
{\sqrt{\widetilde V_n}}
+o_p(1).
\]
\end{lemma}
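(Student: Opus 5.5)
We prove the three displayed claims in turn and then combine them with Slutsky's lemma.

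\emph{Linearization.} Apply Lemma~\ref{lem:softmax-differentiability} with $f=d$ and $h=\widehat d_n-d$. This gives
\[
\sqrt n\,|\mathcal R_{\beta_n,d}(\widehat d_n-d)|\le \tfrac{\beta_n}{2}\sqrt n\norminf{\widehat d_n-d}^2 .
\]
The right-hand side is $o_p(1)$ by \ref{ass:beta-rate}, as soon as we know $\sqrt n\norminf{\widehat d_n-d}=O_p(1)$. That rate follows from the second claim together with $\sqrt n\norminf{\widetilde d_n-d}=O_p(1)$. The latter holds because $\sqrt n(\widetilde d_n-d)=\frac{n}{k_0}\mathbb U_n(k_0/n)-\frac{n}{n-k_0}\{\mathbb U_n(1)-\mathbb U_n(k_0/n)\}$ is tight by \ref{ass:wip}.

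\emph{Contrast and break estimation.} Write $\widehat d_n-\widetilde d_n$ as a difference of means over index sets that differ by $|\widehat k_n-k_0|$ indices. Two sources of error appear.
\begin{itemize}
\item The mean part contributes at most $\norminf{d}\,|\widehat k_n-k_0|/n$ times a bounded factor, since $\widehat k_n/n\to\tau_0\in(0,1)$. This factor is $o_p(n^{-1/2})$ by \ref{ass:break-estimator}.
\item The error part involves $\sup_{|r-\tau_0|\le\delta_n}\norminf{\mathbb U_n(r)-\mathbb U_n(\tau_0)}$ with $\delta_n=|\widehat k_n-k_0|/n\to0$, and additionally changes the normalisations $1/\widehat k_n$ versus $1/k_0$. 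Both are $o_p(n^{-1/2})$ after scaling, by asymptotic equicontinuity of the tight sequence $\mathbb U_n$ in $C([0,1];\Cspace)$.
\end{itemize}
Here I use $M>0$, which is what makes \ref{ass:break-estimator} available.

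\emph{Selfnormalizer.} This is the main obstacle. I would decompose $\widehat V_n^{(\beta_n)}-\widetilde V_n$ into three replacements.
\begin{itemize}
\item \emph{Base point.} Replace $D\Psi_{\beta_n,\widehat d_n}$ by $\ell_n$. By the Lipschitz bound in Lemma~\ref{lem:softmax-differentiability}, the operator-norm error is at most $\beta_n\norminf{\widehat d_n-d}=O_p(\beta_n n^{-1/2})=o_p(1)$. Each bridge satisfies $\max_q n^{-1/2}\norminf{B_q^{L/R}}=O_p(1)$, and the normalisation $n^{-2}\sum_q$ has $n$ terms. So this replacement costs $o_p(1)$ via $|x^2-y^2|\le|x-y|(|x|+|y|)$.
\item \emph{Split point.} Replace $\widehat k_n$ by $k_0$ in the bridges, and remove the mean parts. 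Before the split the bridges on $\{1,\dots,k_0\}$ kill the constant mean $\mu_1$ exactly. Misclassified indices, at most $|\widehat k_n-k_0|=o_p(\sqrt n)$ of them, contribute mean terms of size $O(|\widehat k_n-k_0|)$ in $B_q$, hence $o_p(\sqrt n)$ uniformly in $q$. The error-part differences are handled by the same equicontinuity argument as above. All of this holds uniformly in $q$, so after $n^{-2}\sum_q$ the contribution is $o_p(1)$.
\item \emph{Prefactor.} $n^2/\{\widehat k_n(n-\widehat k_n)\}$ versus $n^2/\{k_0(n-k_0)\}$ differ by $o_p(1)$, since $\widehat k_n/n\to\tau_0$.
\end{itemize}
Throughout I use $\sup_n\|\ell_n\|_{\Cspace^*}\le1$ from Lemma~\ref{lem:softmax-differentiability}, so all bounds are uniform in $n$ despite $\beta_n\to\infty$.

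\emph{Conclusion.} From the first two claims, the numerator of $T_{n,\Psi}^{\mathrm{SN}}$ equals $\ell_n\{\sqrt n(\widetilde d_n-d)\}+o_p(1)$; the two pieces combine via $|\ell_n(h)|\le\norminf h$. Moreover $\widetilde V_n$ converges in distribution to a law that is a.s.\ strictly positive after rescaling by $\sigma_n^{-2}$, with limit $\propto H_{\tau_0}^2$ by the projected invariance principle and \ref{ass:nondegenerate}. Hence $\widetilde V_n^{-1/2}=O_p(1)$, and the difference of the ratios is $o_p(1)$ by Slutsky's lemma.

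The delicate point is the uniform-in-$q$ control of the bridge differences under the random split. I would handle it by writing each bridge as a continuous functional of $\mathbb U_n$ evaluated at $\widehat k_n/n$, and then using the modulus of continuity of $\mathbb U_n$.
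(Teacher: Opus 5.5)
Your proposal is correct and follows essentially the same route as the paper: the second-order remainder bound of Lemma~\ref{lem:softmax-differentiability} for the linearization, equicontinuity of $\mathbb U_n$ together with $|\widehat k_n-k_0|=o_p(\sqrt n)$ for the contrast, and a base-point/split-point/prefactor decomposition of the selfnormalizer, controlled via $|x^2-y^2|\le|x-y|(|x|+|y|)$ and the uniform $O_p(\sqrt n)$ bound on the bridges. The one item you should state explicitly is the at most $|\widehat k_n-k_0|$ non-common summation indices, each contributing $O_p(n)$ and hence $o_p(n^2)$ in total; your justification of $\widetilde V_n^{-1}=O_p(1)$ via the projected invariance principle and (A4) fills in the final ``hence'' step, which the paper leaves implicit.
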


\begin{proof}
Assumptions~\ref{ass:wip} and \ref{ass:break-estimator} imply
\[
\norminf{\widehat d_n-d}=O_p(n^{-1/2}).
\]
The remainder estimate in
Lemma~\ref{lem:softmax-differentiability} therefore gives
\begin{align*}
&\sqrt n\left|
\Psi_{\beta_n}(\widehat d_n)-\Psi_{\beta_n}(d)
-\ell_n(\widehat d_n-d)
\right|\\
&\hspace{3em}\le
\frac{\sqrt n\,\beta_n}{2}
\norminf{\widehat d_n-d}^2
=
O_p\left(\frac{\beta_n}{\sqrt n}\right)
=o_p(1).
\end{align*}

Put \(m_n=|\widehat k_n-k_0|\). The deterministic contribution caused by
misclassified observations is of order \(m_n/n\), and therefore becomes
\(o_p(n^{-1/2})\) by Assumption~\ref{ass:break-estimator}. For the stochastic
part, tightness and asymptotic equicontinuity of the partial-sum process in
Assumption~\ref{ass:wip} imply that increments over intervals of length
\(m_n/n=o_p(1)\) are \(o_p(1)\) after multiplication by \(n^{-1/2}\).
This proves 
\[
\sqrt{n}\|\hat d_n-\widetilde d_n\|_\infty=o_p(1).
\]

It remains to compare the estimated and oracle quadratic
normalizers in more detail. We first provide some bounds on the magnitude and differences of the error bridges and their oracle versions. We then use these bounds to show that the selfnormalizer and its oracle are the same for our purposes.\\

Put
\[
m_n=|\widehat k_n-k_0|.
\]
By Assumption~\ref{ass:break-estimator},
\[
m_n=o_p(\sqrt n).
\]

The weak invariance principle implies 
\[
\sup_{r\in[0,1]}\norminf{\mathbb U_n(r)}=O_p(1),
\]
and, for every random sequence \(\delta_n\conp0\), also
\[
\omega(\mathbb U_n,\delta_n)
:=
\sup_{\substack{r,r'\in[0,1]\\ |r-r'|\le\delta_n}}
\norminf{\mathbb U_n(r)-\mathbb U_n(r')}
=o_p(1).
\]
Taking \(\delta_n=m_n/n\), it follows that partial sums over the
indices lying between \(k_0\) and \(\widehat k_n\) are
\(o_p(\sqrt n)\). Moreover, changing the coefficients
\(t/k_0\) and \(t/\widehat k_n\) in the sums defining the normalizers, or their right-hand analogues,
produces terms of order
\[
O_p\left(
n\left|\frac1{\widehat k_n}-\frac1{k_0}\right|\sqrt n
\right)
=
O_p\left(\frac{m_n}{\sqrt n}\right)
=
o_p(\sqrt n).
\]
The deterministic contribution resulting from observations assigned
to the wrong regime is bounded uniformly by a constant multiple of
\(m_n\norminf d=o_p(\sqrt n)\). Consequently,
\begin{align}
&\max_{1\le q\le \widehat k_n}
\norminf{B_q^L}
+
\max_{\widehat k_n+1\le q\le n}
\norminf{B_q^R}
\nonumber\\
&\qquad
+
\max_{1\le q\le k_0}
\norminf{\widetilde B_q^L}
+
\max_{k_0+1\le q\le n}
\norminf{\widetilde B_q^R}
=
O_p(\sqrt n),
\label{eq:uniform-bridge-order}
\end{align}
and, on the common index sets,
\begin{align}
\Delta_n
:={}&
\max_{1\le q\le \widehat k_n\wedge k_0}
\norminf{B_q^L-\widetilde B_q^L}
\nonumber\\
&\vee
\max_{(\widehat k_n\vee k_0)+1\le q\le n}
\norminf{B_q^R-\widetilde B_q^R}
=
o_p(\sqrt n).
\label{eq:uniform-bridge-difference}
\end{align}

Next, define
\[
\widehat\ell_n
=
D\Psi_{\beta_n,\widehat d_n},
\qquad
\ell_n
=
D\Psi_{\beta_n,d}.
\]
By Lemma~\ref{lem:softmax-differentiability},
\[
\eta_n
:=
\|\widehat\ell_n-\ell_n\|_{\Cspace^*}
\le
\beta_n\norminf{\widehat d_n-d}
=o_p(1).
\]
For every index belonging to the common left-hand range,
\begin{align*}
\left|
\widehat\ell_n(B_q^L)
-
\ell_n(\widetilde B_q^L)
\right|
&\le
\left|
(\widehat\ell_n-\ell_n)(B_q^L)
\right|
+
\left|
\ell_n(B_q^L-\widetilde B_q^L)
\right|
\\
&\le
\eta_n\norminf{B_q^L}
+
\norminf{B_q^L-\widetilde B_q^L}.
\end{align*}
The same bound holds for the right-hand bridges. Therefore,
by \eqref{eq:uniform-bridge-order} and
\eqref{eq:uniform-bridge-difference},
\[
\max_{\textnormal{common }q}
\left|
\widehat\ell_n(B_q^\cdot)
-
\ell_n(\widetilde B_q^\cdot)
\right|
=o_p(\sqrt n),
\]
where \(B_q^\cdot,\  \cdot\in \{L,R\}\) denotes the appropriate left or right bridge.
At the same time,
\[
\max_{\textnormal{common }q}
\left\{
\left|\widehat\ell_n(B_q^\cdot)\right|
+
\left|\ell_n(\widetilde B_q^\cdot)\right|
\right\}
=
O_p(\sqrt n),
\]
because both derivative functionals have operator norm at most one.

Introduce the unscaled quadratic sums
\begin{align*}
\widehat Q_n
={}&
\sum_{q=1}^{\widehat k_n}
\left\{\widehat\ell_n(B_q^L)\right\}^2
+
\sum_{q=\widehat k_n+1}^{n}
\left\{\widehat\ell_n(B_q^R)\right\}^2,
\\
\widetilde Q_n
={}&
\sum_{q=1}^{k_0}
\left\{\ell_n(\widetilde B_q^L)\right\}^2
+
\sum_{q=k_0+1}^{n}
\left\{\ell_n(\widetilde B_q^R)\right\}^2.
\end{align*}
For the common indices, the identity
\[
|x^2-y^2|=|x-y||x+y|
\]
gives
\begin{align*}
\sum_{\textnormal{common }q}
\left|
\left\{\widehat\ell_n(B_q^\cdot)\right\}^2
-
\left\{\ell_n(\widetilde B_q^\cdot)\right\}^2
\right|
&\le
n\,
o_p(\sqrt n)\,
O_p(\sqrt n)
\\
&=
o_p(n^2).
\end{align*}
The two quadratic sums contain at most \(m_n\) non-common terms.
By \eqref{eq:uniform-bridge-order}, every such term is \(O_p(n)\)
uniformly. Hence their total contribution is
\[
O_p(m_n n)=o_p(n^2).
\]
It follows that
\[
\widehat Q_n-\widetilde Q_n=o_p(n^2),
\qquad
\widehat Q_n=O_p(n^2).
\]

Finally, write
\[
\widehat a_n
=
\frac1{\widehat k_n(n-\widehat k_n)},
\qquad
a_{0,n}
=
\frac1{k_0(n-k_0)}.
\]
Since \(k_0/n\to\tau_0\in(0,1)\) and
\(\widehat k_n/n\conp\tau_0\),
\[
\widehat a_n=O_p(n^{-2}),
\qquad
a_{0,n}=O(n^{-2}).
\]
Furthermore,
\begin{align*}
|\widehat a_n-a_{0,n}|
&=
\frac{
\left|
k_0(n-k_0)
-
\widehat k_n(n-\widehat k_n)
\right|
}{
\widehat k_n(n-\widehat k_n)k_0(n-k_0)
}
\\
&=
\frac{
|\widehat k_n-k_0|\,
|n-\widehat k_n-k_0|
}{
\widehat k_n(n-\widehat k_n)k_0(n-k_0)
}
\\
&=
O_p\left(\frac{m_n}{n^3}\right).
\end{align*}
Because
\[
\widehat V_n^{(\beta_n)}
=
\widehat a_n\widehat Q_n,
\qquad
\widetilde V_n
=
a_{0,n}\widetilde Q_n,
\]
we obtain
\begin{align*}
\left|
\widehat V_n^{(\beta_n)}-\widetilde V_n
\right|
&\le
|\widehat a_n-a_{0,n}|\widehat Q_n
+
a_{0,n}|\widehat Q_n-\widetilde Q_n|
\\
&=
O_p\left(\frac{m_n}{n^3}\right)O_p(n^2)
+
O(n^{-2})o_p(n^2)
\\
&=
O_p\left(\frac{m_n}{n}\right)+o_p(1)
\\
&=
o_p(1).
\end{align*}
This proves
\[
\widehat V_n^{(\beta_n)}-\widetilde V_n=o_p(1).
\]

\end{proof}

\begin{lemma}[Projected selfnormalized invariance principle]
\label{lem:projected-sn}
Let \(\ell_n\in\Cspace^*\) be deterministic linear functionals satisfying
\(\sup_n\|\ell_n\|_{\Cspace^*}\le1\) and
\[
\liminf_{n\to\infty}
\E\{\ell_n(\mathbb B(1))^2\}>0.
\]
Then, under Assumption~\ref{ass:wip},
\[
\frac{\ell_n\{\sqrt n(\widetilde d_n-d)\}}
{\sqrt{\widetilde V_n}}
\cond \mathcal T_{\tau_0},
\]
where, for a standard real Brownian motion \(W\),
\[
\mathcal T_{\tau_0}
=
\frac{W(\tau_0)-\tau_0W(1)}
{\{\tau_0(1-\tau_0)\mathcal Q_{\tau_0}(W)\}^{1/2}},
\]
with
\begin{align*}
\mathcal Q_{\tau_0}(W)
={}&
\int_0^{\tau_0}
\left(W(r)-\frac r{\tau_0}W(\tau_0)\right)^2dr\\
&+
\int_{\tau_0}^1
\left(W(1)-W(r)
-\frac{1-r}{1-\tau_0}\{W(1)-W(\tau_0)\}
\right)^2dr.
\end{align*}
Equivalently,
\[
\mathcal T_{\tau_0}
\overset{d}=
\frac{Z}
{\left\{
\tau_0^2\int_0^1\mathbb B_1^0(u)^2\,du
+(1-\tau_0)^2\int_0^1\mathbb B_2^0(u)^2\,du
\right\}^{1/2}},
\]
where \(Z\sim N(0,1)\), and \(\mathbb B_1^0,\mathbb B_2^0\) are independent
standard Brownian bridges, mutually independent of \(Z\).
\end{lemma}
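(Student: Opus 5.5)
The plan is to reduce everything to a scalar functional invariance principle for the projected partial sums and then apply the continuous mapping theorem.

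\textbf{Step 1: scalar processes.} For each $n$ set $W_n(r):=\ell_n(\mathbb U_n(r))/\sigma_n$, where $\sigma_n^2=\E\{\ell_n(\mathbb B(1))^2\}$. Since $\ell_n$ is linear, $\ell_n(\widetilde d_n-d)$ and all bridges $\ell_n(\widetilde B_q^{\cdot})$ are linear functionals of the partial sums of the errors. Up to interpolation errors of order $O_p(n^{-1/2})$, and using $k_0/n\to\tau_0$, we can write
\[
\sqrt n\,\ell_n(\widetilde d_n-d)=\sigma_n\Bigl[\frac{W_n(\tau_0)}{\tau_0}-\frac{W_n(1)-W_n(\tau_0)}{1-\tau_0}\Bigr]+o_p(1),
\]
and similarly $n^{-1/2}\ell_n(\widetilde B^L_{\lfloor nr\rfloor})=\sigma_n\{W_n(r)-\tfrac{r}{\tau_0}W_n(\tau_0)\}+o_p(1)$, uniformly in $r$, with the analogous formula on the right. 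Riemann-sum approximation then gives
\[
\widetilde V_n=\frac{\sigma_n^2}{\tau_0(1-\tau_0)}\mathcal Q_{\tau_0}(W_n)+o_p(1).
\]
The factors $\sigma_n$ cancel in the ratio. A short computation shows that $\frac{W(\tau_0)}{\tau_0}-\frac{W(1)-W(\tau_0)}{1-\tau_0}=\frac{W(\tau_0)-\tau_0W(1)}{\tau_0(1-\tau_0)}$, which yields exactly the stated form of $\mathcal T_{\tau_0}$. The map $w\mapsto$ (numerator, $\mathcal Q_{\tau_0}(w)$) is continuous on $C[0,1]$, and the ratio is continuous wherever $\mathcal Q_{\tau_0}>0$; this holds almost surely for Brownian motion. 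So it suffices to show $W_n\cond W$ in $C[0,1]$.

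\textbf{Step 2: the main obstacle.} Because $\ell_n$ varies with $n$, we cannot apply continuous mapping directly to $\mathbb U_n\cond\mathbb B$. We use a subsequence argument instead. The bound $\|\ell_n\|\le1$ and Banach--Alaoglu alone are delicate, since $\Cspace^*$ is not separable. However, the pair $(\mathbb U_n,\ell_n)$ can be handled by Skorokhod representation. Choose versions with $\mathbb U_n\to\mathbb B$ almost surely in $C([0,1];\Cspace)$. Then
\[
\sup_r|\ell_n(\mathbb U_n(r))-\ell_n(\mathbb B(r))|\le\sup_r\|\mathbb U_n(r)-\mathbb B(r)\|_\infty\to0.
\]
Hence it suffices to study $\ell_n(\mathbb B(\cdot))/\sigma_n$. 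This process is exactly a standard real Brownian motion for each $n$: it is centered Gaussian with independent increments (inherited from $\mathbb B$) and has variance $r\sigma_n^2/\sigma_n^2=r$. Its law is therefore $W$ for every $n$.

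\textbf{Step 3: conclusion.} Since $\liminf\sigma_n^2>0$, dividing by $\sigma_n$ preserves the $o_p(1)$ errors, so $W_n\cond W$. Combined with Step 1 and the continuous mapping theorem, this gives the first representation of $\mathcal T_{\tau_0}$.

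For the equivalent form, decompose $W$ on $[0,\tau_0]$ and $[\tau_0,1]$. The bridge $W(r)-\tfrac r{\tau_0}W(\tau_0)$ is independent of $W(\tau_0)$, and the right-hand bridge is independent of $W(1)-W(\tau_0)$; the two bridges are also independent of each other. The numerator is a linear combination of $W(\tau_0)$ and $W(1)-W(\tau_0)$ with variance $\tau_0(1-\tau_0)$, so it is independent of both bridges. Rescaling time by $r=\tau_0u$ gives
\[
\int_0^{\tau_0}(\cdot)^2\,dr=\tau_0^2\int_0^1\mathbb B_1^0(u)^2\,du,
\]
and similarly the right-hand integral equals $(1-\tau_0)^2\int_0^1\mathbb B_2^0(u)^2\,du$. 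Normalizing the numerator to $Z$ then yields the stated distributional identity.
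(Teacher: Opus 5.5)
Your proof is correct, but it handles the $n$-dependence of $\ell_n$ differently from the paper.

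\textbf{The paper's route.} It argues by compactness. Because $\Cspace$ is separable, the unit ball of $\Cspace^*$ is weak-* sequentially compact. So every subsequence has a further subsequence along which $\ell_n\to\ell$ pointwise, and by norm-boundedness uniformly on compact sets. Tightness from (A1) then gives $\ell_n\{\mathbb U_n(\cdot)\}\cond\ell\{\mathbb B(\cdot)\}=\sigma W$. Here $\sigma^2>0$ tacitly uses $\sigma_n^2\to\sigma^2$ via dominated convergence. The subsequence principle finishes the argument.

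\textbf{Your route.} You couple $\mathbb U_n$ and $\mathbb B$ via Skorokhod. This is legitimate because $C([0,1];\Cspace)\cong C([0,1]^2)$ is separable and the statistic is a measurable function of $\mathbb U_n$ with deterministic $\ell_n$. You then use $\|\ell_n\|\le 1$ to transfer the almost sure uniform convergence. The key observation is that $\ell_n\{\mathbb B(\cdot)\}/\sigma_n$ is exactly a standard Brownian motion for every $n$.

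\textbf{What each buys.} Your route needs no subsequence extraction and no limiting functional $\ell$. It also does not need $\sigma_n$ to converge, only to be eventually bounded away from zero, and the $\sigma_n$ cancel exactly in the ratio. The paper's route avoids the representation theorem and stays purely analytic.

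Your aside that Banach--Alaoglu is ``delicate since $\Cspace^*$ is not separable'' is mistaken. Weak-* sequential compactness of the dual ball requires separability of the predual $\Cspace$, not of its dual, and that is exactly what the paper invokes.

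In Step 1, interpolation errors such as $\norminf{\epsilon_{k_0+1}}/\sqrt n$ are $o_p(1)$ by the asymptotic equicontinuity in (A1), not $O_p(n^{-1/2})$ in general, since no moment or stationarity bound on individual $\epsilon_i$ is assumed. However, $o_p(1)$ is all your argument uses, so this is harmless.
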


\begin{proof}
Because \(\Cspace\) is separable and \(\|\ell_n\|\le1\) an application of Banach-Alaoglu yields that every subsequence of $\{\ell_n\}_{n \geq 1}$
contains a further subsequence (this is where we use separability; otherwise Banach-Alaoglu does not give sequential compactness)  that converges pointwise on \(\Cspace\) to
some \(\ell\in\Cspace^*\). By the uniform boundedness principle this implies convergence
uniformly on compact subsets of \(\Cspace\). Combined with tightness of the sequential process in
Assumption~\ref{ass:wip}, this yields
\[
\ell_n\{\mathbb U_n(\cdot)\}
\cond
\ell\{\mathbb B(\cdot)\}
=\sigma W(\cdot)
\quad\text{in }C([0,1]),
\]
where \(\sigma^2=\E\{\ell(\mathbb B(1))^2\}>0\). The positivity follows
from the uniform non-degeneracy condition on the variances. By the
continuous mapping theorem, along this further subsequence,
\[
\ell_n\{\sqrt n(\widetilde d_n-d)\}
\cond
\frac{\sigma\{W(\tau_0)-\tau_0W(1)\}}
{\tau_0(1-\tau_0)},
\]
and, additionally using Riemann-sum convergence,
\begin{align*}
\widetilde V_n\cond
\frac{\sigma^2}{\tau_0(1-\tau_0)}\Bigg[&
\int_0^{\tau_0}
\left(W(r)-\frac r{\tau_0}W(\tau_0)\right)^2dr\\
&+
\int_{\tau_0}^1
\left(W(1)-W(r)
-\frac{1-r}{1-\tau_0}\{W(1)-W(\tau_0)\}
\right)^2dr
\Bigg].
\end{align*}
The limiting denominator is positive almost surely. The factor \(\sigma\)
cancels in the ratio, so every subsequence has a further subsequence with the
same limit law \(\mathcal T_{\tau_0}\). This proves convergence of the full
sequence. The Brownian-bridge representation follows by rescaling the two
segments of \(W\); their bridges are independent of the segment endpoints,
and hence independent of the standardized numerator.
\end{proof}

Finally we state the proof of Theorem \ref{theo:population-centered-sn}.
\begin{proof}
Lemma~\ref{lem:oracle-reduction} reduces the statistic to its oracle projected
version. Lemma~\ref{lem:projected-sn}, applied with
\(\ell_n=D\Psi_{\beta_n,d}\), gives the asserted weak limit. The bound
\(\|\ell_n\|_{\Cspace^*}\le1\) follows from
Lemma~\ref{lem:softmax-differentiability}, while Assumption~\ref{ass:nondegenerate}
ensures non-degeneracy. 
\end{proof}

\subsection{Proof of Theorem \ref{theo:laplace-bias}}

\begin{proof}
For \(j=1,\ldots,m\), define
\[
\phi_j(t)=s_jd(t).
\]
Then
\[
\phi_j(t_j)=M,
\qquad
\phi_j'(t_j)=0,
\qquad
\phi_j''(t_j)=-\lambda_j<0.
\]
Because the set of extremal points is finite and each \(t_j\) lies
in the interior of \([0,1]\), there exists \(\delta>0\) such that
the intervals
\[
U_j=(t_j-\delta,t_j+\delta),
\qquad j=1,\ldots,m,
\]
are pairwise disjoint and contained in \((0,1)\). By reducing
\(\delta\), if necessary, we may also assume that
\[
s_jd(t)\ge \frac M2,
\qquad t\in U_j,
\]
and that \(t_j\) is the unique maximizer of \(\phi_j\) on \(U_j\).

Since the complement
\[
K=[0,1]\setminus\bigcup_{j=1}^m U_j
\]
is compact and contains no extremal point, there exists
\(\eta>0\) such that
\[
|d(t)|\le M-\eta,
\qquad t\in K.
\]
It follows that
\[
\int_K
\left\{
e^{\beta d(t)}+e^{-\beta d(t)}
\right\}\,dt
\le
2e^{\beta(M-\eta)}.
\]
Thus the contribution from the integral over \(K\) is exponentially smaller than
\(e^{\beta M}\beta^{-1/2}\) and hence negligible for our purposes.\\

Inside \(U_j\), the term \(e^{\beta s_jd(t)}\) is the dominant
exponential, whereas the other exponential satisfies
\[
e^{-\beta s_jd(t)}
\le
e^{-\beta M/2}.
\]
Consequently,
\begin{align*}
&\int_0^1
\left\{
e^{\beta d(t)}+e^{-\beta d(t)}
\right\}\,dt
\\
&\qquad=
\sum_{j=1}^m
\int_{U_j}e^{\beta\phi_j(t)}\,dt
+
O\left(e^{\beta(M-\eta)}\right)
+
O\left(e^{-\beta M/2}\right).
\end{align*}
It therefore remains to expand
\[
J_{j,\beta}
=
\int_{U_j}e^{\beta\phi_j(t)}\,dt.
\]

For \(x\) in a neighborhood of zero, Taylor's formula gives
\[
\phi_j(t_j+x)
=
M
-\frac{\lambda_j}{2}x^2
+\frac{\gamma_j}{6}x^3
+\frac{\kappa_j}{24}x^4
+r_j(x),
\]
where
\[
r_j(x)=o(x^4)
\qquad\text{as }x\to0.
\]
Make the change of variables
\[
y=\sqrt{\beta\lambda_j}\,x.
\]
Then
\begin{align*}
J_{j,\beta}
&=
\frac{e^{\beta M}}{\sqrt{\beta\lambda_j}}
\int_{-\delta\sqrt{\beta\lambda_j}}
^{\delta\sqrt{\beta\lambda_j}}
e^{-y^2/2}
\exp\left\{
\frac{a_jy^3}{\sqrt\beta}
+
\frac{b_jy^4}{\beta}
+
\rho_{j,\beta}(y)
\right\}\,dy,
\end{align*}
where
\[
a_j=\frac{\gamma_j}{6\lambda_j^{3/2}},
\qquad
b_j=\frac{\kappa_j}{24\lambda_j^2},
\]
and
\[
\rho_{j,\beta}(y)
=
\beta
r_j\left(
\frac{y}{\sqrt{\beta\lambda_j}}
\right).
\]
For each fixed \(y\),
\[
\rho_{j,\beta}(y)
=
o(\beta^{-1}),
\]
because \(r_j(x)=o(x^4)\). For the sake of brevity we omit showing that applying this expansion within the integral is valid in detail, giving only an outline of the required steps:
\begin{itemize}
    \item[(1)] Pick $r_\beta=o(\beta^{1/6})$ and divide the integral into regions $|y|>r_\beta$ and $|y|<r_\beta$.
    \item[(2)] Use appropriate uniformity of the Taylor expansion for $|y|<r_\beta$ to justify the expansion below for the integral over that region.
    \item[(3)] Show that $\max\{\phi_j(t_j-x),\phi_j(t_j+x)\}\leq M-cx^2, c>0$ to control the contribution of the region $|y|>r_\beta$ to the integral.
\end{itemize}

Expanding the second exponential gives, for every fixed \(y\),
\begin{align*}
&\exp\left\{
\frac{a_jy^3}{\sqrt\beta}
+
\frac{b_jy^4}{\beta}
+
\rho_{j,\beta}(y)
\right\}
\\
&\qquad=
1
+
\frac{a_jy^3}{\sqrt\beta}
+
\frac1\beta
\left\{
b_jy^4+\frac{a_j^2y^6}{2}
\right\}
+
o(\beta^{-1}).
\end{align*}

Since the integration interval is symmetric,
\[
\int_{\mathbb R}y^3e^{-y^2/2}\,dy=0.
\]
Moreover,
\[
\int_{\mathbb R}e^{-y^2/2}\,dy
=
\sqrt{2\pi},
\]
\[
\int_{\mathbb R}y^4e^{-y^2/2}\,dy
=
3\sqrt{2\pi},
\]
and
\[
\int_{\mathbb R}y^6e^{-y^2/2}\,dy
=
15\sqrt{2\pi}.
\]
Therefore,
\begin{align*}
J_{j,\beta}
&=
e^{\beta M}
\sqrt{\frac{2\pi}{\beta\lambda_j}}
\left[
1+
\frac1\beta
\left\{
3b_j+\frac{15}{2}a_j^2
\right\}
+
o\left(\frac1\beta\right)
\right].
\end{align*}
Using the definitions of \(a_j\) and \(b_j\), we obtain
\[
3b_j+\frac{15}{2}a_j^2
=
\frac{\kappa_j}{8\lambda_j^2}
+
\frac{5\gamma_j^2}{24\lambda_j^3}
=
c_j.
\]
Thus
\[
J_{j,\beta}
=
e^{\beta M}
\sqrt{\frac{2\pi}{\beta}}
\lambda_j^{-1/2}
\left[
1+\frac{c_j}{\beta}
+o\left(\frac1\beta\right)
\right].
\]

Summing over the finitely many extremal points gives
\begin{align*}
&\int_0^1
\left\{
e^{\beta d(t)}+e^{-\beta d(t)}
\right\}\,dt
\\
&\qquad=
e^{\beta M}
\sqrt{\frac{2\pi}{\beta}}
\left[
S_0(d)+\frac{S_1(d)}{\beta}
+o\left(\frac1\beta\right)
\right]
\\
&\qquad=
e^{\beta M}
\sqrt{\frac{2\pi}{\beta}}
S_0(d)
\left[
1+
\frac{S_1(d)}{\beta S_0(d)}
+
o\left(\frac1\beta\right)
\right].
\end{align*}

Taking logarithms yields
\begin{align*}
&\log
\int_0^1
\left\{
e^{\beta d(t)}+e^{-\beta d(t)}
\right\}\,dt
\\
&\qquad=
\beta M
-\frac12\log\beta
+\frac12\log(2\pi)
+\log S_0(d)
\\
&\qquad\quad+
\log\left[
1+
\frac{S_1(d)}{\beta S_0(d)}
+
o\left(\frac1\beta\right)
\right].
\end{align*}
Since
\[
\log(1+x)=x+o(x)
\qquad\text{as }x\to0,
\]
the last term equals
\[
\frac{S_1(d)}{\beta S_0(d)}
+
o\left(\frac1\beta\right).
\]
Dividing by \(\beta\) proves
\[
\Psi_\beta(d)
=
M+
\frac{
-\frac12\log\beta
+\frac12\log(2\pi)
+\log S_0(d)
}{\beta}
+
\frac{S_1(d)}{S_0(d)\beta^2}
+
o(\beta^{-2}).
\]
The asserted expansion of \(b_\beta(d)\) follows immediately.
\end{proof}

\subsection{Proof of Theorem \ref{theo:plateau-bias}}

\begin{proof}
Using
\[
e^{\beta d(t)}+e^{-\beta d(t)}
=
e^{\beta|d(t)|}+e^{-\beta|d(t)|},
\]
we obtain
\begin{align*}
&e^{-\beta M}
\int_0^1
\left\{
e^{\beta d(t)}+e^{-\beta d(t)}
\right\}\,dt
\\
&\qquad=
\int_0^1 e^{-\beta g(t)}\,dt
+
e^{-\beta M}
\int_0^1 e^{-\beta|d(t)|}\,dt.
\end{align*}
The second term satisfies
\[
0
\leq
e^{-\beta M}
\int_0^1e^{-\beta|d(t)|}\,dt
\leq
e^{-\beta M},
\]
and is therefore exponentially small and negligible for our purposes.

It remains to study
\[
J_\beta
=
\int_0^1e^{-\beta g(t)}\,dt.
\]
where $g(t)=M-|d(t)|$. Let \(\mu_g\) denote the pushforward of Lebesgue measure under the
map \(g\). Then \(F\) is the distribution function of \(\mu_g\), and
\[
J_\beta
=
\int_{[0,M]}e^{-\beta u}\,dF(u).
\]
Since
\[
F(0)=m,
\]
the measure induced by \(F\) has an atom of mass \(m\) at zero.
Writing
\[
H(u)=F(u)-m,
\]
we have \(H(0)=0\), and integration by parts gives
\[
J_\beta
=
m
+
e^{-\beta M}(1-m)
+
\beta\int_0^M e^{-\beta u}H(u)\,du.
\]
By assumption,
\[
H(u)=Ku^\alpha+o(u^\alpha),
\qquad u\downarrow0.
\]

Fix \(\delta\in(0,M)\). The contribution from \([\delta,M]\) is
exponentially small, since
\[
0
\leq
\beta\int_\delta^M e^{-\beta u}H(u)\,du
\leq
\beta e^{-\beta\delta}.
\]
On \([0,\delta]\), write
\[
H(u)=Ku^\alpha+u^\alpha r(u),
\qquad
r(u)\longrightarrow0
\]
as \(u\downarrow0\). It follows that
\begin{align*}
\beta\int_0^\delta e^{-\beta u}H(u)\,du
&=
K\beta\int_0^\delta e^{-\beta u}u^\alpha\,du
+
\beta\int_0^\delta e^{-\beta u}u^\alpha r(u)\,du.
\end{align*}
After the change of variables \(v=\beta u\),
\[
K\beta\int_0^\delta e^{-\beta u}u^\alpha\,du
=
K\beta^{-\alpha}
\int_0^{\beta\delta}e^{-v}v^\alpha\,dv
=
K\Gamma(1+\alpha)\beta^{-\alpha}
+
o(\beta^{-\alpha}).
\]
Similarly,
\[
\beta\int_0^\delta e^{-\beta u}u^\alpha r(u)\,du
=
o(\beta^{-\alpha}),
\]
by dominated convergence after the same change of variables.
Consequently,
\[
J_\beta
=
m+
K\Gamma(1+\alpha)\beta^{-\alpha}
+
o(\beta^{-\alpha}).
\]
Therefore,
\[
\int_0^1
\left\{
e^{\beta d(t)}+e^{-\beta d(t)}
\right\}\,dt
=
e^{\beta M}
\left[
m+
K\Gamma(1+\alpha)\beta^{-\alpha}
+
o(\beta^{-\alpha})
\right].
\]

Taking logarithms yields
\begin{align*}
\Psi_\beta(d)
&=
M+
\frac1\beta
\log\left[
m+
K\Gamma(1+\alpha)\beta^{-\alpha}
+
o(\beta^{-\alpha})
\right]
\\
&=
M+
\frac{\log m}{\beta}
+
\frac1\beta
\log\left[
1+
\frac{K\Gamma(1+\alpha)}{m}\beta^{-\alpha}
+
o(\beta^{-\alpha})
\right].
\end{align*}
Using
\[
\log(1+x)=x+o(x),
\qquad x\to0,
\]
we obtain
\[
\Psi_\beta(d)
=
M+
\frac{\log m}{\beta}
+
\frac{K\Gamma(1+\alpha)}{m}
\beta^{-(1+\alpha)}
+
o\left(\beta^{-(1+\alpha)}\right).
\]
This proves the assertion.
\end{proof}

\subsection{Proof of Theorem \ref{theo:bias-corrected-sn}}
\begin{proof}
The proof is essentially the same as that of Theorem \ref{theo:population-centered-sn}, we will focus mainly on the necessary adaptations. Before we begin we note that the case $M=0$ needs to be handled separately, as the softmax function has derivative zero in this case. It is sufficient to show that the statistic diverges to negative infinity in this case. An application of the continuous mapping theorem and assumption (A1) yield convergence of the numerator to negative infinity at rate $\sqrt{n}$. Arguments similar to those at the end of the proof of Theorem 3.3 in \cite{Bastian:2025} yield that for any $\epsilon>0$ we may choose $b$ such that $\hat \tau_n$ takes values in the interval $[b,1-b]$ with probability $1-\epsilon$. Combining this two facts with a routine calculation yields the desired convergence to negative infinity. We omit the details and continue with the case $M>0$.

We recall the definition of $\ell^{\mathcal L}_n$ in \eqref{eq:derivative-bias-corrected} and put
\[
A_0=\sum_{i=1}^3|a_i|,
\qquad
A_1=\sum_{i=1}^3|a_i|c_i.
\]
Lemma~\ref{lem:softmax-differentiability} gives
\[
\|\ell_n^{\mathcal L}\|_{\Cspace^*}\leq A_0,
\]
as well as
\[
\left|
\mathcal L_{\beta_n}(f+h)-\mathcal L_{\beta_n}(f)
-D\mathcal L_{\beta_n,f}(h)
\right|
\leq
\frac{\beta_nA_1}{2}\norminf h^2
\]
and
\[
\left\|
D\mathcal L_{\beta_n,f}
-D\mathcal L_{\beta_n,g}
\right\|_{\Cspace^*}
\leq
\beta_nA_1\norminf{f-g}.
\]
Consequently, the proof of Lemma~\ref{lem:oracle-reduction} applies
unchanged, with constants \(A_0\) and \(A_1\) replacing \(1\). In
particular,
\[
\frac{\sqrt n\{
\mathcal L_{\beta_n}(\widehat d_n)
-\mathcal L_{\beta_n}(d)\}}
{\sqrt{\widehat{\mathcal V}_n^{(\beta_n)}}}
=
\frac{
\ell_n^{\mathcal L}\{\sqrt n(\widetilde d_n-d)\}}
{\sqrt{\widetilde{\mathcal V}_n}}
+o_p(1).
\]

It remains only to verify non-degeneracy of the associated long-run variance $(\sigma^{\mathcal L}_n)^2=\E\big[\{\ell^{\mathcal L}_n(B(1))\}^2\Big]$. Under the assumptions of
Theorem~\ref{theo:laplace-bias}, for every \(c>0\) and \(h\in\Cspace\),
\[
D\Psi_{c\beta_n,d}(h)
\longrightarrow
\frac{\sum_{j=1}^m
s_j\lambda_j^{-1/2}h(t_j)}
{S_0(d)}.
\]
Under the assumptions of Theorem~\ref{theo:plateau-bias}, the corresponding
limit is
\[
\frac1m\int_{\mathcal E(d)}
\operatorname{sgn}\{d(t)\}h(t)\,dt.
\]
Both limits are independent of \(c\). Since \(\sum_i a_i=1\),
\(\ell_n^{\mathcal L}\) and \(D\Psi_{\beta_n,d}\) therefore have the
same pointwise limit. Their uniform boundedness and
\(\E\norminf{\mathbb B(1)}^2<\infty\) imply that Assumption~\ref{ass:nondegenerate}, i.e. $\liminf_n(\sigma^{\mathcal L}_n)^2>0$, also holds for \(\ell_n^{\mathcal L}\). Lemma~\ref{lem:projected-sn},
applied after the immaterial rescaling by \(A_0\), now yields
\[
\frac{\sqrt n\{
\mathcal L_{\beta_n}(\widehat d_n)
-\mathcal L_{\beta_n}(d)\}}
{\sqrt{\widehat{\mathcal V}_n^{(\beta_n)}}}
\cond \mathcal T_{\tau_0}.
\]

Finally, the cancellation identities and
Theorems~\ref{theo:laplace-bias}--\ref{theo:plateau-bias} give
\[
\mathcal L_{\beta_n}(d)-M
=
\begin{cases}
O(\beta_n^{-2}),
&\text{under Theorem~\ref{theo:laplace-bias}},\\
O(\beta_n^{-(1+\alpha)}),
&\text{under Theorem~\ref{theo:plateau-bias}}.
\end{cases}
\]
The stated rate conditions therefore imply
\(\sqrt n\{\mathcal L_{\beta_n}(d)-M\}=o(1)\), and the asserted weak
convergence follows from Slutsky's theorem. The statements about the
rejection probabilities follow from the continuity of
\(q_{1-\alpha_0}(\cdot)\); for \(M\neq\Delta\), the deterministic term
\(\sqrt n(M-\Delta)\) dominates.
\end{proof}

\putbib
\end{bibunit}

\end{document}